\def\Url@twoslashes{\mathchar`\/\@ifnextchar/{\kern-.2em}{}}
\g@addto@macro\UrlSpecials{\do\/{\Url@twoslashes}}
\newtheorem{theorem}{Theorem}[section]
\newtheorem{lemma}[theorem]{Lemma}
\newtheorem{prop}[theorem]{Proposition}
\newtheorem{corollary}[theorem]{Corollary}
\newtheorem{sub-lemma}[theorem]{Sub-lemma}
\theoremstyle{definition}
\newtheorem{remark}[theorem]{Remark}
\theoremstyle{theorem}
\newtheorem{a-theorem}{A-Theorem}[section]
\newtheorem{a-lemma}[a-theorem]{A-Lemma}
\newtheorem{a-prop}[a-theorem]{A-Proposition}
\newtheorem{a-corollary}{A-Corollary}[section]
\theoremstyle{definition}
\newtheorem{a-definition}[a-theorem]{A-Definition}
\newtheorem{a-example}[a-theorem]{A-Example}
\newtheorem{a-examples}[a-theorem]{A-Examples}
\newtheorem{a-remark}[a-theorem]{A-Remark}
\theoremstyle{theorem}
\newtheorem*{theorem*}{Theorem}
\newtheorem*{question*}{Question}
\newcommand{\suchthat}{\;\ifnum\currentgrouptype=16 \middle\fi|\;}
\renewcommand\paragraph{\@startsection{paragraph}{4}{\z@}{-3.25ex\@plus -1ex \@minus -.2ex}{1.5ex \@plus .2ex}{\normalfont\normalsize\bfseries}} 
\renewcommand\subsubsection{\@secnumfont}{\bfseries}%
\renewcommand\subsubsection{\@startsection{subsubsection}{3}
  \z@{.5\linespacing\@plus.7\linespacing}{-.5em}%
  {\normalfont\bfseries}}
\DeclareMathOperator*{\Spec}{\mathrm{Spec}}
\DeclareMathOperator*{\im}{\mathrm{im}}
\DeclareMathOperator*{\rk}{\mathrm{rk}}
\DeclareMathOperator*{\Lie}{\mathrm{Lie}}
\DeclareMathOperator{\vol}{\mathrm{vol}}
\DeclareMathOperator{\diag}{\mathrm{diag}}
\begin{document}

\title{Tamagawa numbers of quasi-split groups over function fields}
\date{\today}
\author{Ralf K\"ohl}
\address{Kiel University, Department of Mathematics, Heinrich-Hecht-Platz 6, 24118 Kiel, Germany}
\email{koehl@math.uni-kiel.de}

\author{M. M. Radhika}
\address{Kerala School of Mathematics, Kozhikode 673571, Kerala, India}
\email{mmr@ksom.res.in}

\author{Ankit Rai}
\address{Chennai Mathematical Institute, Chennai 603103, India}
\email{ankitr@cmi.ac.in}

\begin{abstract}
We use Morris' theory of Eisenstein series for reductive groups over global function fields in order to extend Harder's computation of Tamagawa numbers to quasi-split groups.
\end{abstract}

\maketitle

\tableofcontents

\section{Introduction}
Let $F$ be a global field and $\mathbbmss A$ be the ad\`{e}les over $F$. For an algebraic group $G$ defined over $F$, an invariant $\tau(G) \in \mathbb{R}$ called the Tamagawa number can be associated to $G$.
This is the volume of the space $G(F)\backslash G(\mathbbmss{A})$ with respect to a certain left $G(F)$-invariant Haar measure on $G(\mathbbmss{A})$ called the Tamagawa measure. 
It was conjectured by Weil that for an absolutely simple simply connected algebraic group $G$ over a global field, the Tamagawa number $\tau(G)$ equals $1$.
This was first proved for split groups over number fields by Langlands \cite{Langlands} and over function fields by Harder \cite{Harder-74}.
The proof given by Langlands was rewritten in the adelic language for quasi-split groups by Rapoport \cite{Rapoport-tamagawa} and Lai \cite{Lai-80}, thus giving a unified proof for the split and quasi-split groups over a number field.\\

\noindent
Using Arthur's trace formula, Kottwitz \cite{Kottwitz-tamagawa} proved Weil's conjecture over number fields. The proof of Weil's conjecture over function fields for any semsimple group $G$ was given by Gaitsgory--Lurie \cite{Gaitsgory-Lurie} by a method different than the one used in the earlier works of Langlands, Lai, Rapoport and Kottwitz.
In another direction the theory of Eisenstein series was developed for general reductive groups over function fields in the works of Morris \cite{Morris-82, Morris-82b}. Now that this theory is well developed, it is natural to proceed as in the works of Harder and Lai to directly prove Weil's conjecture for quasi-split groups over function fields. {The present article should be considered as a contribution towards confirming Weil's conjecture for function fields via the strategy used for number fields. The main theorem of this article is as stated.
\begin{theorem} \label{thm:main-theorem}
Let $F$ be a function field of a smooth projective curve over $\mathbb{F}_q$ where $q \neq 2$ and $G$ is a quasi-split semisimple simply connected group over $F$. Then
\[
    \tau(G) = 1.
\]
\end{theorem}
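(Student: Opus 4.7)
The plan is to port to function fields the Langlands--Lai--Rapoport strategy used for quasi-split groups in the number-field setting, with Morris' theory of Eisenstein series playing the role of the classical number-field machinery. I would argue by induction on the semisimple $F$-rank of $G$. The base case comprises the two quasi-split simply connected rank-one groups: the split $SL_2$, covered by Harder's theorem, and the quasi-split unitary group $SU_3$ attached to a separable quadratic extension $K/F$. The latter should be settled by a direct volume computation, analogous to Weil's rank-one calculation over number fields, using the presentation of $SU_3$ as a norm-one subgroup inside a restriction of scalars.

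For the inductive step, fix a Borel pair $(B,T)$ in $G$ and choose a maximal standard $F$-parabolic $P=MN\supseteq B$. Since $G$ is quasi-split, so is $M$, and the derived group of $M$ has strictly smaller semisimple rank. The Tamagawa number $\tau(G)$ is written as a volume integral over $G(F)\backslash G(\mathbbmss{A})^1$. Reduction theory for function fields reduces the computation to a Siegel set, whose boundary contribution is controlled by the constant term along $P$ of an Eisenstein series $E(g,s,\varphi)$ induced from cuspidal data $\varphi$ on $M(\mathbbmss{A})$. Morris' theory supplies the meromorphic continuation, functional equation, and constant term formula for $E(\cdot,s,\varphi)$, writing the constant term as a sum over the relative Weyl group of normalized intertwining operators whose unramified factors are ratios of Hecke L-functions of characters of $T$ encoded by the Galois action defining the quasi-split structure.

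Applying a Maass--Selberg-type identity (in its function-field incarnation from Morris) and extracting the residue at the appropriate point then yields an identity expressing $\tau(G)$ as a product of $\tau(M^{\mathrm{sc}})$ with a ratio of residues at $s=1$ of these Hecke L-functions. The functional equations for abelian L-functions over the curve (supplied by class field theory, or Tate's thesis in its function-field form) force this ratio to equal $1$, and combined with the inductive hypothesis $\tau(M^{\mathrm{sc}})=1$ this gives $\tau(G)=1$.

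The principal obstacle I foresee is analytic: one must check that Morris' Eisenstein series satisfy the precise Maass--Selberg relation required by the Langlands--Lai argument, that they are holomorphic at the relevant residue point, and that no unexpected residual contributions appear elsewhere. The hypothesis $q\neq 2$ most plausibly enters via the need for a sufficiently non-degenerate additive character of $\mathbb{F}_q$ used in the Whittaker-model computations that underlie the local unramified L-factors, and possibly through the Bruhat--Tits combinatorics at ramified local places in the non-split case; accommodating $q=2$ would require separate care at those places.
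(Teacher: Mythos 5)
Your overall framework --- induction on the semisimple $F$-rank via a maximal parabolic $P=MN$ and Eisenstein series attached to \emph{cuspidal} data on $M$ --- is not the Langlands--Lai--Rapoport strategy, and its pivotal step is unsupported. The constant function, whose pairing with an incomplete theta series is what computes $\mathrm{vol}(G(F)\backslash G(\mathbbmss{A}))$, lies in the residual spectrum attached to the cuspidal datum $(A,\mathrm{triv})$ of the \emph{minimal} parabolic; it is never a residue of an Eisenstein series induced from a genuinely cuspidal representation of a proper Levi $M\supsetneq A$. Consequently no Maass--Selberg relation for such series can produce an identity of the form $\tau(G)=\tau(M^{\mathrm{sc}})\cdot(\text{ratio of residues of Hecke $L$-functions})$, and the inductive step as you state it has no mechanism behind it. (One could try to induct instead through the \emph{residual} datum, i.e.\ the trivial representation of $M$, but analyzing that Eisenstein series already requires the full Borel Eisenstein series, so the induction collapses into the minimal-parabolic analysis anyway.) The paper works with the Borel throughout and never inducts on rank: it forms pseudo-Eisenstein series $\theta_\varphi$ from compactly supported $\varphi$ on $\mathbbmss{N}B(F)\backslash\mathbbmss{G}/\mathbbmss{K}$, builds a self-adjoint Hecke operator $T$ whose powers converge to the projection onto the constants, identifies that projection with $\log(q)^r$ times the $r$-fold residue at $\lambda=\rho$ of $E(g,\lambda)\widehat{\varphi}(\lambda)$, and via Morris' constant-term formula reduces everything to the residue at $s=1$ of the single intertwining operator $M(w_0,s\rho)$.

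Rank-one groups do enter, but not as the base case of an induction and not through a separate volume computation: the Bhanu-Murthy--Gindikin--Karpelevich reduction factors $M(w_0,\lambda)$ as a product over positive indivisible roots $\alpha$ of intertwining operators for the rank-one subgroups $G(\alpha)$, and these are evaluated by Rapoport's explicit formula for $SL_2$ and Lai's formula for $SU(3,E_v/F_v)$. The latter is where $q\neq 2$ is actually used ($2$ must be invertible in the local fields for Lai's computation), not an additive-character or Whittaker-model issue, though your second guess about the ramified unitary places is in the right spirit. Your instinct that residues of abelian $L$-functions at $s=1$ appear and cancel is correct, but the cancellation is against the measure-theoretic constant comparing $d\lambda$ with Lebesgue measure on $X^*(A)\otimes\mathbb{R}$, and the endgame is the identity $\tau(G)=\tau(A)$ with $A$ the maximal torus --- which equals $1$ because for simply connected quasi-split $G$ the torus $A$ is a product of $\mathrm{Res}_{E_i/F}\mathbb{G}_m$'s and $\tau(\mathrm{Res}_{E/F}\mathbb{G}_m)=\tau(\mathbb{G}_m)=1$. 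To repair your sketch you would need to replace the cuspidal-parabolic induction by this minimal-parabolic residue argument.
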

}

\noindent
For non-quasi-split groups either the methods of Kottwitz will have to be used or, alternatively, some other way to establish that the Tamagawa number does not change when passing to inner forms. However, given the unsatisfactory state of the trace formula over function fields, at the moment one cannot proceed further with the methods of Kottwitz. Nevertheless, some progress towards Arthur's trace formula over function fields has been made in \cite{Tuan}. \\

\noindent
Tamagawa \cite{Tamagawa66} originally observed that the group $SO_q(\mathbbmss{A})$ can be endowed with a natural measure such that the Minkowski--Siegel formula is equivalent to the assertion that the Tamagawa number (i.e., the volume with respect to this natural measure) be $2$. Weil \cite{Weil82} subsequently observed that for simply connected groups one should expect the value $1$, which as -- outlined above -- has been confirmed by Kottwitz \cite{Kottwitz-tamagawa} for number fields and by Gaitsgory--Lurie \cite{Gaitsgory-Lurie} for function fields.\\

\noindent
{The organization of the article is as follows. Section \S \ref{subsec:generalities-quas-split-groups} recalls the basics on reductive groups over global and local fields, root systems and sets up the notation for the subsequent sections.
In Section \S \ref{subsec:haar-measures} the Tamagawa measure for semisimple groups, and more generally for reductive groups is defined following the work \cite{Oesterle-84} of Oesterle.
Section \S \ref{subsec:quasi-char} deals with quasi-characters on tori.\\

\noindent
The aim of \S \ref{sec:determining-tamagawa-numbers} is to prove Theorem \ref{thm:main-theorem}. Section \S \ref{subsec:Eisenstein-series} contains generalities on Eisenstein series. In Section \S \ref{subsec:intertwining-operators} we follow the methods of Lai and Rapoport \cite{Lai-80, Rapoport-tamagawa} for computing certain intertwining operators for groups over functions fields and thus, obtain precise information about their poles and zeros (See Theorem \ref{thm:intertwining}). Section \S \ref{subsec:prerequistes-for-the-computation} and \S \ref{subsec:final-computation} is devoted to proving the main theorem.\\

\noindent
The Appendix comprises the proofs of a few technical lemmas used in the main content of this article. These results are well-known and has been added with the hope of improving the exposition of this article.
} \\

\noindent
After preparing the present paper we learned from G. Prasad that our results have also been achieved by E. Kushnirsky in the unpublished part of his PhD thesis \cite{Kushnirsky}.

\section{Basics and Notation}
\label{sec:basics-and-notation}

\subsection{Generalities about quasi-split reductive groups} \label{subsec:generalities-quas-split-groups}

Let $F$ be a function field of a smooth projective curve defined over $\mathbb{F}_q$, $q\neq 2$, of genus $g$. Let $F^{sep}$ be a separable closure of $F$ and $\bar{F}$ be the algebraic closure. For any place $v$ of $F$, let $F_v$ denote the corresponding local field, $k(v)$ be the residue field at $v$, $\mathcal{O}_v$ be the ring of integers in $F_v$, and $\pi_v$ or $\pi_{F_v}$ be a uniformizer of $F_v$.
Let $G$ be a quasi-split group defined over $F$, and $B \subset G$ be a $F$-Borel subgroup fixed throughout this article. Let $B=A\cdot N$ be a Levi decomposition, where $N$ is the unipotent radical and $A$ is a maximal torus defined over $F$. Let $\overline{N}$ be the opposite unipotent radical. Assume that the maximal torus $A$ has been so chosen that the maximal split subtorus $A_d$ of $A$ is the maximal split torus of $G$.\\

\noindent
For any place $v$ of $F$, let $K_v$ denote a special maximal compact subgroup of $G(F_v)$ which always exists by Bruhat--Tits theory. If $G$ is unramified at $v$, then choose $K_v$ to be the hyperspecial maximal compact subgroup.
It is known that a reductive algebraic group $G$ is unramified at almost all places. In other words, for almost all places $v$ of $F$, the group $G \times_F F_v$ admits a smooth reductive model over $\Spec(\mathcal{O}_v)$ and $K_v = G(\mathcal{O}_v)$. Let $S$ denote the set of places of $F$ such that $G$ is unramified outside $S$. \\

\noindent
Let $\mathbbmss{G}, \mathbbmss{B}, \mathbbmss{N}$ and $\mathbbmss{K}$ respectively denote the groups $G(\mathbbmss{A}), B(\mathbbmss{A}), N(\mathbbmss{A})$, and $\underset{v}{\prod} K_v$. We have the Iwasawa decomposition
\[
	\mathbbmss{G} = \mathbbmss{K} \mathbbmss{B}.
\]
\noindent
Recall that a quasi-character is a continuous homomorphism from $A(F)\backslash A(\mathbbmss{A})$ to $\mathbb{C}^{\times}$. A character $\lambda : A \rightarrow \mathbb{G}_m$, defined over $F$, gives a quasi-character $\lambda : A(F)\backslash A(\mathbbmss{A}) \rightarrow q^{\mathbb Z}$ defined to be the composite map
\[
	A(F)\backslash A(\mathbbmss{A}) \rightarrow F^{\times}\backslash \mathbbmss{A}^{\times} \rightarrow q^{\mathbb Z}.
\]
Denote with $X^*(A)$ (resp. $X^*(A_d)$) the group of characters of the torus $A$ (resp. $A_d$) defined over the field of definition of $A$ (resp. $A_d$), and with $\Lambda(A)$ the set of quasi-characters of $A$.

\subsubsection{Root systems}
Let $G \supset B \supset A$ be as before. 
Let $\Pi_F \subset X^*(A_d)$ be the subset of non-trivial weights of $A_d$ on $\mathfrak{g}$. Let $X_*(A_d)$ be the set of cocharacters of $A_d$ and $\Pi^{\vee}_F \subset X_*(A_d)$ be the set of coroots. The root data $(\Pi_F, X^*(A_d))$ can be enhanced to the tuple $(X^*(A_d), \Pi_F, X_*(A_d), \Pi^{\vee}_F)$ called the relative root datum. Denote the absolute root datum by $(X^*(A\times F^{sep}), \Pi,\, X_*(A\times F^{sep}), \Pi^{\vee})$.
Let $X^*_+(A \times F^{sep})$ and $X^*_-(A\times F^{sep})$ respectively denote the weight lattice of the universal cover and the root lattice of $G \times F^{sep}$. In the sequel we shall assume $G$ is simply connected unless stated otherwise. Note that this assumption implies $X^*_-(A\times F^{sep}) = X^*_+(A\times F^{sep})$. Let $\Pi^+$ and $\Pi^+_F$ respectively denote the set of positive absolute roots and the set of positive relative roots of $G$ with respect to $B$.
Let $\rho$ be the half sum of positive relative roots counted with multiplicity. We can also define $\rho$ to be the element of $X^*(A_d)$ or $X^*(A)$ given by $a \mapsto \det\left(\mathrm{Ad}(a|_{\Lie(N)})\right)^{1/2}$.\\

\noindent
Let $W_F \colonequals N_G(A)(F)/Z_G(A)(F)$ be the relative Weyl group and $W = N_G(A)(F^{sep})/Z_G(A)(F^{sep})$ be the absolute Weyl group. We have an embedding $W_F \hookrightarrow W$. 
Recall that there is a $W_F$-equivariant positive definite bilinear form $\langle\cdot, \cdot \rangle : X^*(A_d)_{\mathbb{R}} \times X^*(A_d)_{\mathbb{R}} \rightarrow \mathbb{R}$ such that, the coroot $a^{\vee}$ corresponding to the root $a \in \Pi_F$ is the element $2a/\langle a, a \rangle$ under the isomorphism $X^*(A_d) \simeq X_*(A_d)$ given by $\langle \cdot, \cdot \rangle$.
The set $(\mathbb{Z}\Pi^{\vee}_F)^* \subset X^*(A_d)_{\mathbb{Q}}$, defined under the pairing $X^*(A_d)_{\mathbb{Q}} \times X_*(A_d)_{\mathbb{Q}} \rightarrow \mathbb{Q}$, is called the relative weight lattice of $G$.


\subsubsection{Groups over local fields}
Given a place $v$ of $F$, the group $G \times_F F_v$ is quasi-split as $G$ was assumed to be quasi-split. Furthermore, if $G \times_F F_v$ splits over an unramified extension $E$ of $F_v$, then $G$ admits a smooth reductive model over $\mathcal{O}_v$ and thus a canonical choice of maximal hyperspecial compact subgroup $K_v=G(\mathcal{O}_v)$.
If $G\times_F F_v$ does not split over an unramified extension then it is possible to construct a \textit{parahoric} (see footnote \footnote{Not a reductive group scheme.}) group scheme over $\Spec(\mathcal{O}_v)$ and we define $K_v \colonequals G(\Spec(\mathcal{O}_v))$. This is again a maximal compact subgroup of $G(F_v)$.
We assume these choices have been made and fixed for the rest of the article.\\

\noindent
In the later sections we will need a classification of quasi-split groups over function fields of characteristic $\neq 2$. Thang \cite{Thang-22} gives a complete classification of these groups which was started in the seminal work of Bruhat--Tits \cite{Bruhat-Tits}.
According to the table in \cite{Thang-22}, up to central isogeny there are two quasi-split absolutely simple algebraic groups of relative rank $1$. They are
\begin{enumerate}
	\item $SL_2$
	\item $SU(3, E_v/F_v)$, where $E_v/F_v$ is a quadratic extension.
\end{enumerate}

\subsection{Dual groups} \label{subsec:dual-groups}
We will recall the definition of the dual groups and setup a few more notation here. Let $G$ be a quasi-split group over any field $F$, $A$ be a maximal torus in $G$ defined over $F$, and let $E/F$ be a separable extension such that $G\times_F E$ is a split reductive group.
Let $\Psi(G) \colonequals (X^*(A\times_F E), \Pi_E, X_*(A\times_F E), \Pi_E^{\vee})$ be the root datum of the split reductive group.
Consider the dual root datum $\Psi(G)^{\vee} \colonequals (X_*(A\times_F E), \Pi_E^{\vee}, X^*(A\times_F E), \Pi_E)$ to which is associated a connected semisimple group $\widehat{G}$ over $\mathbb{C}$.
Observe that $\mathrm{Gal}(E/F)$ acts on the root datum $\Psi(G)$ and consequently, we get a Galois action on the dual root datum $\Psi(G)^{\vee}$. This will induce an action of $\mathrm{Gal}(E/F)$ on the associated dual group $\widehat{G}$ as explained below.\\

\noindent
Let $\widehat{A}$ be the maximal torus of $\widehat{G}$. Then the construction of the Langlands dual gives a canonical identification $\eta : \widehat{A}(\mathbb{C}) \rightarrow (X^*(A\times_F E)\otimes \mathbb{C})^{\times}$. Let $\Delta \subset \Pi_E$ be the set of simple roots.
For $\alpha_i \in \Delta$, choose the vectors $X_{\alpha^{\vee}_i} \in \widehat{\mathfrak{g}}\colonequals \Lie(\widehat{G})$ such that for every $\sigma \in \mathrm{Gal}(E/F)$, $\sigma(X_{\alpha^{\vee}_i}) = X_{\sigma\alpha^{\vee}_i}$.
This gives a pinning $(X_*(A\times_F E), \Pi_E^{\vee}, X^*(A\times_F E), \Pi_E, \lbrace X_{\alpha^{\vee}} \rbrace_{\alpha^{\vee}\in \Delta^{\vee}})$ of $\widehat{G}$ equipped with a $\mathrm{Gal}(E/F)$ action.
Since $\mathrm{Gal}(E/F)$ acts on the dual root datum, this action can be lifted to an action on the group $\widehat{G}$ using the splitting of the short exact sequence
\[
    1 \rightarrow \mathrm{Inn}(\widehat{G}) \rightarrow \mathrm{Aut}(\widehat{G}) \rightarrow \mathrm{Aut}(\Psi(G)^\vee) \rightarrow 1
\]
provided by the pinning.

\subsection{Haar measures} \label{subsec:haar-measures}
Let $\omega$ be a left invariant differential form on $G$ of degree $\dim(G)$ defined over $F$. This induces a form $\omega_v$ on $G \times_F F_v$. Denote by $\mathrm{ord}_{e}(\omega_v)$ the number $n$ such that $(\omega_{v})_e(\wedge^{\dim(G)} \Lie(G)) = \pi^n_v$.
The form $\omega_{v}$ defines a left $G(F_v)$-invariant measure on $G(F_v)$ denoted by $\overline{\mu}_{v, \omega}$. For all places $v \notin S$, normalize $\overline{\mu}_{v, \omega}$ as follows
\[
	\overline{\mu}_{v, \omega}(G(\mathcal{O}_v)) = \frac{\sharp G(k(v))}{(\sharp k(v))^{\dim(G)+\mathrm{ord}_e(\omega_v)}}
\]
(cf. \cite[\S 2.5]{Oesterle-84}). For $v\in S$, we refer the reader to \cite[\S 10.1.6]{Bourbaki} for the construction of the Haar measure $\overline{\mu}_{v,\omega}$ (denoted $\mathrm{mod}(\omega_v)$ in Bourbaki) on $G(F_v)$.\\

\noindent
We need more preliminaries before defining the Tamagawa measure on $\mathbbmss G$. For $v \notin S$ denote by $L_v(s, X^*(G))$ the local Artin $L$-function associated to the $\mathrm{Gal}(F^{sep}/F)$-representation $X^*(G \times_F F^{sep})\otimes \mathbb{C}$, where $X^*(G \times_F F^{sep})$ denotes the group of characters of $G$ defined over $F^{sep}$.
Renormalize the measure $\overline{\mu}_{v, \omega}$ on $G(F_v)$ to $L_v(1, X^*(G))\overline{\mu}_{v, \omega}$, and denote the renormalized measure by $\mu_{v, \omega}$. The unnormalized Tamagawa measure on $\mathbbmss{G}$ is then defined to be the measure $\overline{\mu} \colonequals \prod_v \mu_{v, \omega}$. Let $L^S(s,X^*(G))$ denote the product of local $L$-functions outside the set of ramified places $S$. We normalize $\overline{\mu}$ to 
\[
	\mu \colonequals q^{\dim(G)(1-g)} \frac{\overline{\mu}}{\lim_{s\to 1}(1-s)^{\rk{X^*(G)}}L^S(s,X^*(G))},
\]
and call it the Tamagawa measure of $\mathbbmss G$. This  measure is independent of the choice of $\omega$ (cf. \cite[Def. 4.7]{Oesterle-84}). When $G$ is semisimple, $\mathbbmss{G}$ is a unimodular group and hence, the measure $\mu$ descends to $G(F)\backslash \mathbbmss{G}$. The Tamagawa number is then defined as
\[
	\tau(G) \colonequals \vol_{\mu}(G(F)\backslash \mathbbmss{G}).
\]
To extend the definition of the Tamagawa measure for a general reductive group $G$ we proceed as follows. Consider the kernel $\mathbbmss{G}_1$ of the homomorphism $\mathbbmss{G} \xrightarrow{\mathfrak{I}} \hom_{\mathbb{Z}}(X^*(G), q^\mathbb{Z})$ defined by $g\mapsto \big( \chi \mapsto \|\chi(g)\|\big)$ where $g\colonequals(g_v)\in\mathbbmss{G}$.
The image of $\mathbbmss{G}$ under $\mathfrak{I}$ is of finite index (see \cite[\S5.6 Prop.]{Oesterle-84}), and the Tamagawa number of $G$ is defined as
\[
    \tau(G) \colonequals  \frac{\vol_{\mu}(G(F)\backslash \mathbbmss{G}_1)}{(\log q)^{\rk{X^*(G)}}[\hom_{\mathbb{Z}}(X^*(G), q^{\mathbb{Z}}) : \mathfrak{I}(\mathbbmss{G})]}.
\]

\noindent
Choose a Haar measure on $F_v$ such that $\mathrm{vol}(\mathcal{O}_v) = 1$. Let $\overline{da}$ and $\overline{dn}$ be the unnormalized  Tamagawa measures on $A(\mathbbmss{A})$ and $\mathbbmss{N}$ respectively.
Let $dk$ be the unique left invariant (and hence right invariant) Haar measure on $\mathbbmss{K}$ such that $\mathrm{vol}_{dk}(\mathbbmss{K}) = 1$.
Using the Iwasawa decomposition $\mathbbmss{G} = \mathbbmss{N}A(\mathbbmss{A})\mathbbmss{K}$, $\rho^{-2}(a)\overline{dn}\,\overline{da}dk$ is a left invariant Haar measure on $\mathbbmss{G}$. Thus, there exists a positive constant $\kappa$ such that
\[
	\overline{\mu} = \kappa\rho^{-2}(a)\overline{dn}\,\overline{da}dk.
\]
Let $w_0$ be the longest element of the Weyl group that sends all the positive roots to the negative roots and $\dot{w}_0$ be a representative in $N_G(A)(F)$ such that $\dot{w}_{0v}$ belongs to $K_v$ for all $v \notin S$. Then $N(F_v)A(F_v)\dot{w}_0N(F_v)$ is a dense open subset and has full measure. Thus, comparing the measures $\mu_{v,\omega}$ and $\rho^{-2}(a)dn_vda_vdn_v'$ (see footnote \footnote{$dn_v'$ is the measure on $N(F_v)$. The prime is meant to differentiate the two appearances of $N$.}) we get 
\[
	\mu_{v, \omega} = c_v\rho^{-2}(a)dn_vda_vdn'_v,
\]
where $c_v = \frac{L_v(1,X^*(G))}{L_v(1, X^*(A))}$ when $v \notin S$, and $c_v = 1$ otherwise.

\subsection{Quasi-characters on tori} \label{subsec:quasi-char}

Let $A$ be a torus as before and $r=F$-$\rk(G)$. The map 
\begin{align*}
\mathfrak{I} : &A(\mathbbmss{A}) \rightarrow \hom(X^*(A), q^{\mathbb{Z}})\\
               &a \mapsto (\chi \mapsto \|\chi(a)\|)
\end{align*}
defined in \cite{Oesterle-84} induces a map
\[
\begin{tikzcd}[row sep = tiny]
    \mathfrak{I}^*_{\mathbb{C}} : X^*(A) \otimes \mathbb{C} \arrow[r] & \hom(A(\mathbbmss{A})/A(\mathbbmss{A})_1, \mathbb{C}^{\times})\\
    \sum_i c_i\chi_i \arrow[r, mapsto] & (a \mapsto \prod_i \|\chi_i(a)\|^{c_i}).
\end{tikzcd}
\]
The map $\mathfrak{I}^*_{\mathbb{C}}$ is surjective and $X^*(A)\otimes \frac{2\pi\iota}{\log q}\mathbb{Z}$ is a finite index subgroup of $\ker(\mathfrak{I}^*_{\mathbb{C}})$.
Both these assertions follow from the existence of the commutative diagram below
\begin{equation} \label{eqn:comm-diag-A_d-A}
\begin{tikzcd}
    X^*(A) \otimes \mathbb{C} \arrow[d, "\sim" {anchor=north, rotate=90}] \arrow[r, "\mathfrak{I}_{\mathbb{C}}^*"] & \hom(A(\mathbbmss{A})/A(\mathbbmss{A})_1, \mathbb{C}^{\times}) \arrow[d, "\sim" {anchor=south, rotate=90}]\\
    X^*(A_d) \otimes \mathbb{C} \arrow[r, "\mathfrak{I}_{\mathbb{C}}^*"'] & \hom(A_d(\mathbbmss{A})/A_d(\mathbbmss{A})_1, \mathbb{C}^{\times}),
\end{tikzcd}
\end{equation}
where the vertical arrows are induced by the inclusion $A_d \subset A$. The right vertical arrow is an isomorphism since the obvious inclusion $A_d(\mathbbmss{A})/A_d(\mathbbmss{A})_1 \hookrightarrow  A(\mathbbmss{A})/A(\mathbbmss{A})_1$ is an isomorphism. This follows from the fact that the anisotropic part of the torus is contained in $A(\mathbbmss{A})_1$.
The left vertical arrow is an isomorphism since the torus $A$ is quasi-split, which implies that the map $X^*(A) \rightarrow X^*(A_d)$ is injective and the image is of finite index.
Because the kernel of the bottom arrow in \eqref{eqn:comm-diag-A_d-A} is known to be $X^*(A_d)\otimes \frac{2\pi\iota}{\log q}\mathbb{Z}$ (see \cite{Harder-74}), $X^*(A) \otimes  \frac{2\pi\iota}{\log q}\mathbb{Z}$ is a finite index subgroup of the kernel of the top arrow.
The induced map on the quotient is again denoted $\mathfrak{I}^*_\mathbb{C}$
\begin{equation} \label{eqn:iso}
    X^*(A)\otimes \mathbb{C}/\left(X^*(A)\otimes \frac{2\pi\iota}{\log q} \mathbb{Z}\right) \xrightarrow{\mathfrak{I}^*_{\mathbb{C}}} \hom(A(\mathbbmss{A})/A(\mathbbmss{A})_1, \mathbb{C}^{\times}).
\end{equation}

\noindent
Fix a coordinate system on $X^*(A)\otimes \mathbb{C}/\left(X^*(A)\otimes \frac{2\pi\iota}{\log q} \mathbb{Z}\right)$ as follows.
Let $\lbrace \varpi_i \rbrace$ be the fundamental weights of the group $G$. Denote by $[\varpi_i]$ the sum over $\mathrm{Gal}(E/F)$-orbit of $\varpi_i$.
Since $G$ is assumed to be simply connected we have the equality $X^*(A\times F^{sep}) = \oplus_i \mathbb{Z}\varpi_i$. Moreover, $G$ is  quasi-split and hence by Lemma \ref{lemma:quasisplit-tori-structure} $A$ is a quasi-split torus. Now, using \cite[Thm. 2.4]{Oesterle-84} we get that $[\varpi_i]$ is a $\mathbb{Z}$-basis of $X^*(A)$.
The above choice of coordinate system induces the isomorphism
\begin{equation} \label{eqn:coordinate-system}
    \mathbb{Z}^r \xrightarrow{\xi} X^*(A).
\end{equation}
A small computation shows that $\xi(1,1,\dots, 1) = \rho$.
\\

\noindent
For $\lambda \in \Lambda(A)$ define $\Re\lambda(t) \colonequals |\lambda(t)| \in \mathbb{R}$ and   
\[
	\Lambda_{\sigma}(A) \colonequals \lbrace \lambda \in \Lambda(A) \;|\; \Re(\lambda) = \sigma \rbrace.
\]
The latter is a translate of $\Lambda_0(A)$ which is the Pontryagin dual of $A(F)\backslash A(\mathbbmss{A})$. Equip $\Lambda_0(A)$ with the Haar measure $d\lambda$ that is dual to the measure on $A(F)\backslash A(\mathbbmss{A})$ induced by $\overline{da}$.
The measure on $\Lambda_{\sigma}(A)$ is then the unique left $\Lambda_0(A)$-invariant measure such that the volume remains the same. We fix this measure for the future computations.

\subsubsection{Comparison of measures on quasi-characters} \label{subsubsec:constant-c}
The short exact sequence  $$1 \rightarrow A(F)\backslash A(\mathbbmss{A})_1 \rightarrow A(F)\backslash A(\mathbbmss{A}) \rightarrow A(\mathbbmss{A})/A(\mathbbmss{A})_1 \rightarrow 1$$  of locally compact abelian groups gives the exact sequence
\[
1 \rightarrow \hom(A(\mathbbmss{A})/A(\mathbbmss{A})_1, S^1) \rightarrow \Lambda_0(A) \rightarrow \hom(A(F)\backslash A(\mathbbmss{A})_1, S^1) \rightarrow 1.
\]

Since the last term is discrete we get the equality $\hom(A(\mathbbmss{A})/A(\mathbbmss{A})_1, S^1) = \Lambda_0(A)^\circ$.
The pullback of the measure $d\lambda|_{\Lambda_0(A)^\circ}$ along the map $\mathfrak{I}^*_{\mathbb{C}}$, denoted by $d\lambda|_{\frac{X^*(A)\otimes \mathbb{R}}{2\pi /\log(q)X^*(A)}}$, can be compared with the dual measure on $X^*(A) \otimes \mathbb{R}$.
Arguing as in \cite[Lemma 6.7]{Lai-80} we get the following:

\begin{lemma}
\[
    d\lambda|_{\frac{X^*(A)\otimes \mathbb{R}}{2\pi /\log(q)X^*(A)}} = \frac{[\hom(X^*(A), q^{\mathbb{Z}}), \im \mathfrak{I}]} {\mathrm{vol}_{\overline{da}} (A(F)\backslash A(\mathbbmss{A})_1)} \left(\frac{\log q}{2\pi}\right)^rdz_1 \wedge \dots \wedge dz_r.
\]
\end{lemma}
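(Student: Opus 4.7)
The plan is to apply Pontryagin duality to the short exact sequence
\[
1 \to A(F)\backslash A(\mathbbmss{A})_1 \to A(F)\backslash A(\mathbbmss{A}) \to \mathcal{L} \to 1,
\]
where $\mathcal{L} := A(\mathbbmss{A})/A(\mathbbmss{A})_1$, with careful bookkeeping of measure normalizations. Write $V := \vol_{\overline{da}}(A(F)\backslash A(\mathbbmss{A})_1)$, $H^* := \hom(X^*(A), q^{\mathbb{Z}})$, and $m := [H^* : \im\mathfrak{I}]$. Both sides of the desired identity are translation-invariant Haar measures on the compact torus $T := X^*(A) \otimes \mathbb{R}/(2\pi/\log q)X^*(A)$, so it is enough to match their total masses. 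The right-hand side has total mass $1$ on $T$ (in the $\xi$-coordinates $z_i$, $T$ has Lebesgue volume $(2\pi/\log q)^r$, which is cancelled by the prefactor $(\log q/2\pi)^r$).

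First I would compute $\vol_{d\lambda}(\Lambda_0(A)^\circ)$ directly by Plancherel, applied to the test function $f := V^{-1}\, \mathbf{1}_{A(F)\backslash A(\mathbbmss{A})_1}$. One side gives $\int |f|^2\, d\overline{da} = V^{-1}$, while the Pontryagin--Fourier transform $\widehat{f}$ equals the characteristic function of $\Lambda_0(A)^\circ$, because any character not trivial on the compact subgroup $A(F)\backslash A(\mathbbmss{A})_1$ integrates to zero over it. Plancherel therefore yields $\vol_{d\lambda}(\Lambda_0(A)^\circ) = V^{-1}$.

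Next I would factor the restriction of $\mathfrak{I}^*_{\mathbb{C}}$ to $T$ as
\[
T \xrightarrow{\sim} \hom(H^*, S^1) \xrightarrow{\pi} \hom(\mathcal{L}, S^1) = \Lambda_0(A)^\circ,
\]
where the first arrow is the standard Pontryagin-duality isomorphism for the free abelian group $H^* \cong \mathbb{Z}^r$, and $\pi$ is dual to the index-$m$ inclusion $\mathcal{L} \hookrightarrow H^*$, hence a covering of compact tori of degree $m$. Pulling a measure back through a degree-$m$ covering multiplies total mass by $m$, so the pullback of $d\lambda|_{\Lambda_0(A)^\circ}$ to $T$ has total mass $m/V$. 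Comparing with the right-hand side (total mass $1$) and invoking translation invariance on $T$ gives the claimed identity.

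The main obstacle is keeping the Pontryagin normalizations consistent: in particular, (i) verifying that the quotient measure on $\mathcal{L}$ inherited from $\overline{da}$ is the counting measure (a short Weil-formula check using that each $A(F)\backslash A(\mathbbmss{A})_1$-coset has $\overline{da}$-volume $V$); (ii) confirming that $\pi$ has degree equal to the index $m$, via the duality $0 \to (H^*/\mathcal{L})^{\vee} \to \widehat{H^*} \to \widehat{\mathcal{L}} \to 0$; and (iii) checking that the $\xi$-coordinate system places the standard dual-to-counting measure on $\widehat{H^*}$ into the clean form $(\log q/2\pi)^r dz_1 \wedge \cdots \wedge dz_r$. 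Once these pieces are in place, the total-mass comparison on the compact torus $T$ produces the formula immediately.
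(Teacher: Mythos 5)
Your argument is correct and follows essentially the same route as the paper: both identify the total $d\lambda$-mass of $\Lambda_0(A)^\circ=\hom(A(\mathbbmss{A})/A(\mathbbmss{A})_1,S^1)$ as $1/\mathrm{vol}_{\overline{da}}(A(F)\backslash A(\mathbbmss{A})_1)$ (the paper via the dual-of-a-discrete-group normalization, you via an equivalent Plancherel computation with the indicator of the compact open subgroup), and both then pull back through the degree-$[\hom(X^*(A),q^{\mathbb{Z}}):\im\mathfrak{I}]$ covering $\hom(X^*(A),q^{\mathbb{Z}})^{\vee}\to\Lambda_0(A)^\circ$ coming from dualizing the index-$m$ inclusion, before comparing with the normalized Lebesgue measure in the $\xi$-coordinates. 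The only cosmetic issue is the phrase ``the right-hand side has total mass $1$,'' which should refer to the reference measure $\left(\frac{\log q}{2\pi}\right)^r dz_1\wedge\dots\wedge dz_r$ alone, not to the full right-hand side.
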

\begin{proof}
Recall the map $\mathbb{C}^r\overset{\xi}{\longrightarrow} X^*(A)\otimes\mathbb{C}$ in \eqref{eqn:coordinate-system} giving the 
isomorphism 
$\mathbb{C}^r/\frac{2\pi}{\log q}\mathbb Z^r\simeq X^*(A)\otimes\mathbb{C}/\frac{2\pi}{\log q}X^*(A).$
Equip the latter space with the measure that assigns mass $1$ to the fundamental domain $X^*(A)\otimes\mathbb{R}/\frac{2\pi}{\log q}X^*(A)$, which under the above isomorphism equals the measure $\left(\frac{\log q}{2\pi}\right)^rdz_1\wedge\cdots\wedge dz_r$.
Denote by $\mathfrak{I}^{\vee} : (\hom(X^*(A), q^{\mathbb{Z}}))^{\vee} \rightarrow \hom(A(\mathbbmss{A})/A(\mathbbmss{A})_1, S^1)$ the map induced by $\mathfrak{I}$ on the Pontryagin dual. We get the following short exact sequence
\[
    1 \rightarrow (\hom(X^*(A), q^{\mathbb{Z}})/\im \mathfrak{I})^{\vee} \rightarrow (\hom(X^*(A), q^{\mathbb{Z}}))^{\vee} \xrightarrow{\mathfrak{I}^{\vee}} \hom(A(\mathbbmss{A})/A(\mathbbmss{A})_1, S^1) \rightarrow 1
\]
The term in the middle is isomorphic to $X^*(A)\otimes \mathbb{R}/\frac{2\pi}{\log q}X^*(A)$ and the first term is abstractly isomorphic to $\hom(X^*(A), q^{\mathbb{Z}})/\im \mathfrak{I}$ since it is finite.
Note that the quotient measure on $A(\mathbbmss{A})/A(\mathbbmss{A})_1$ is $\mathrm{vol}_{\overline{da}}(A(F)\backslash A(\mathbbmss{A})_1)$ times the counting measure and hence, the dual measure $d\lambda$ assigns the mass $1/\mathrm{vol}_{\overline{da}}(A(F)\backslash A(\mathbbmss{A})_1)$ to $\hom(A(\mathbbmss{A})/A(\mathbbmss{A})_1, S^1)$.
The pullback of this measure along $\mathfrak{I}^*_{\mathbb{C}}$ is a Haar measure which assigns mass $\frac{|\hom(X^*(A), q^{\mathbb{Z}})/\im \mathfrak{I}|}{\mathrm{vol}_{\overline{da}}(A(F)\backslash A(\mathbbmss{A})_1)}$ to $(\hom(X^*(A), q^{\mathbb{Z}}))^{\vee}$, whereas the Haar measure $\left(\frac{\log q}{2\pi}\right)^rdz_1\wedge\cdots\wedge dz_r$ assigns it mass 1. Hence the claim.
\end{proof}

\section{Determining the Tamagawa numbers} \label{sec:determining-tamagawa-numbers}

\subsection{Eisenstein series} \label{subsec:Eisenstein-series}
Let $\varphi : \mathbbmss{N}B(F)\backslash \mathbbmss{G}/\mathbbmss{K}\rightarrow\mathbb C$ be a compactly supported measurable function. Let $\varphi_v$ denote the local components of $\varphi$ such that $\varphi = \underset{v}{\prod}{\varphi_v}$. For any $\lambda \in \Lambda(A)$ the Fourier transform is defined as
\[
	\widehat{\varphi}(\lambda)(g) \colonequals \underset{A(F)\backslash A(\mathbbmss{A})}{\int} \varphi(ag)\lambda^{-1}(a)\rho^{-1}(a) \overline{da}.
\]
Let $\widehat{\varphi}(\lambda)_v$ denote the restriction of $\widehat{\varphi}(\lambda)$ to $G(F_v)$. Then for $g=(g_v)_v$ we have $\widehat{\varphi}(\lambda)(g) = \underset{v}{\prod}\widehat{\varphi}(\lambda)_v(g_v)$.
Note that $\widehat{\varphi}(\lambda)(g)$ is determined by its value at 1 and we denote this value simply by $\widehat{\varphi}(\lambda)$. On applying Fourier inversion 
\[
	\varphi(g) = \int_{\Re(\lambda)=\lambda_0} \widehat{\varphi}(\lambda)(g) d\lambda, 
\]
where $\lambda_0$ is such that for any coroot $\alpha^{\vee}$ the composite $F^*\backslash \mathbbmss{A}^* \xrightarrow{\alpha^{\vee}} A(F)\backslash A(\mathbbmss{A}) \xrightarrow{|\lambda|} \mathbb{R}^{\times}$ given by $|\cdot|^{s_{\alpha}}$ satisfies $s_{\alpha}>1$.
For $\varphi$ as above define the theta series
\[
	\theta_{\varphi}(g) \colonequals \sum_{\gamma \in B(F)\backslash G(F)} \varphi(\gamma g).
\]
The above series converges uniformly on compact subsets of $G(F)\backslash \mathbbmss{G}$ (see \cite[\S 2.3]{Morris-82}). In fact, the support of $\theta_{\varphi}$ is compact and hence, it is in $L^2(G(F)\backslash \mathbbmss{G})$. Define
\[
	E(g, \widehat{\varphi}(\lambda)) \colonequals \sum_{\gamma \in B(F)\backslash G(F)} \widehat{\varphi}(\lambda)(\gamma g).
\]
We have 
\begin{align*}
\theta_{\varphi}(g) = \sum_{\gamma \in B(F)\backslash G(F)} \varphi(\gamma g) &= \sum_{\gamma \in B(F)\backslash G(F)} \int_{\Re(\lambda) = \lambda_0} \widehat{\varphi}(\lambda)(\gamma g) d\lambda\\
				&\overset{\ast}{=} \int_{\Re(\lambda) = \lambda_0} \sum_{\gamma \in B(F)\backslash G(F)} \widehat{\varphi}(\lambda)(\gamma g) d\lambda \\
				&=\int_{\Re(\lambda) = \lambda_0} E(g, \widehat{\varphi}(\lambda)) d\lambda.
\end{align*}
The assumption on $\lambda_0$ is used in the equality marked with $\ast$ above. The Eisenstein series $E(g, \widehat{\varphi}(\lambda))$ is a priori defined on the domain $\lambda \in X^*(A)\otimes \mathbb{C}/\left( X^*(A)\otimes \frac{2\pi\iota}{\log q}\mathbb Z \right)$ and $\Re(\lambda) - \rho \in C$ (see footnote \footnote{$C$ is the positive Weyl chamber;
or in other words $(\Re(\lambda), \alpha) > (\alpha, \rho)$.}), but can be continued meromorphically to all of $X^*(A)\otimes \mathbb{C}/\left( X^*(A)\otimes \frac{2\pi\iota}{\log q}\mathbb Z\right)$ (see footnote \footnote{The Eisenstein series can be extended to other connected components as well.}).\\

\noindent
For $w \in W_F$, let $\dot{w}$ denote a lift to $N_G(A)(F)$. Set ${}^{\dot{w}}N = \dot{w}N\dot{w}^{-1}$ and $N^{\dot w}=\dot w \overline{N}\dot{w}^{-1}\cap N$. Recall the definition of the local and global intertwining operators,

\begin{align}
\big(M_v(w, \lambda)\widehat{\varphi}(\lambda)_v\big)(g_v) &= \underset{N^{\dot{w}}(F_v)}{\int}\widehat{\varphi}(\lambda)_v(\dot{w}n_vg_v) dn_v \quad \text{ for any $g_v \in G(F_v)$}, \label{eqn:intertwine-local}\\
\big(M(w, \lambda)\widehat{\varphi}(\lambda)\big)(g) &= \underset{{}^{\dot{w}}N(F)\cap N(F)\backslash \mathbbmss{N}}{\int} \widehat{\varphi}(\lambda)(\dot{w}ng) \overline{dn} \notag \\
&= \mathrm{vol}\big({}^{\dot{w}}N(F)\cap N(F)\backslash ({}^{\dot{w}}\mathbbmss N \cap\mathbbmss N)\big) \underset{\dot{w}\mathbbmss{N}\dot{w}\cap \mathbbmss{N}\backslash \mathbbmss{N}}{\int} \widehat{\varphi}(\lambda)(\dot{w}ng)\overline{dn} \notag \\
&= \mathrm{vol}\big({}^{\dot{w}}N(F)\cap N(F)\backslash ({}^{\dot{w}}\mathbbmss N \cap\mathbbmss N)\big) \underset{\mathbbmss{N}^{\dot{w}}}{\int} \widehat{\varphi}(\lambda)(\dot{w}ng)\overline{dn}. \label{eqn:intertwine-global}
\end{align}
The last equality in \eqref{eqn:intertwine-global} follows from $^{\dot{w}}N(F)\cap N(F) \backslash \mathbbmss N={}^{\dot{w}}N(F)\cap N(F)\backslash \big({}^{\dot{w}}\mathbbmss N \cap\mathbbmss N\big) \mathbbmss N^{\dot{w}}$, and the left $\mathbbmss N$-invariance of $\widehat{\varphi}(\lambda)$.
Observe that for $w=w_0$ the group ${}^{\dot{w}}\mathbbmss{N} \cap \mathbbmss{N}$ is trivial and hence combining equations \eqref{eqn:intertwine-local} and \eqref{eqn:intertwine-global} we get
\[
    M(w_0, \lambda) = \underset{v}{\prod}' M_v(w_0, \lambda).
\]
Note that $M(w, \lambda)(\widehat{\varphi}(\lambda))$ is right $\mathbbmss{K}$-invariant, left $\mathbbmss{N}$-invariant and transforms via $\lambda$ on $A(\mathbbmss{A})$. Hence $M(w, \lambda)(\widehat{\varphi}(\lambda))$ is a scalar multiple of $\widehat{\varphi}(\lambda)$.
The operator $M(w, \lambda)$ can thus be confused with the scalar. We record here the following proposition and a lemma to be used in the later sections.

\begin{prop}[\cite{Morris-82}, \S 3.1, Lemma]
\label{lemma:morris}
The constant term of the Eisenstein series is given by the following formula
\[
	E^{B}(g)\colonequals \int_{N(F)\backslash \mathbbmss{N}} E(ng, \widehat{\varphi}(\lambda)) \overline{dn} = \sum_{w\in W_F} \big(M(w, \lambda)\widehat{\varphi}(\lambda)\big)(g).
\]
\end{prop}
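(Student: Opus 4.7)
The plan is to unfold the Eisenstein series via the Bruhat decomposition of $G(F)$ and then reassemble the pieces as the intertwining operators $M(w,\lambda)$. I would work first in the tube $\Re(\lambda)-\rho \in C$ where the sum defining $E(\cdot,\widehat{\varphi}(\lambda))$ is absolutely convergent, so that Fubini applies and sum-integral interchanges are legal; the full identity then follows by uniqueness of meromorphic continuation in $\lambda$.

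Concretely, I would substitute $E(ng,\widehat{\varphi}(\lambda)) = \sum_{\gamma \in B(F)\backslash G(F)} \widehat{\varphi}(\lambda)(\gamma n g)$ into the defining constant-term integral, exchange summation and integration, and then split the sum over $B(F)\backslash G(F)$ according to the Bruhat decomposition $G(F) = \bigsqcup_{w \in W_F} B(F)\dot{w} B(F)$. For each $w$, I would parametrize the coset space $B(F)\backslash B(F)\dot{w} B(F)$ by $F$-points of $N_w \colonequals N \cap \dot{w}^{-1}\overline{N}\dot{w}$ via the map $n \mapsto B(F)\dot{w} n$, using the scheme-theoretic factorization $N = (N \cap \dot{w}^{-1}N\dot{w}) \cdot N_w$ together with the identity $B \cap \dot{w}^{-1}B\dot{w} = A \cdot (N \cap \dot{w}^{-1}N\dot{w})$.

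The core of the argument is the unfolding step. Using the factorization $\mathbbmss{N} = ({}^{\dot{w}}\mathbbmss{N}\cap \mathbbmss{N}) \cdot \mathbbmss{N}^{\dot{w}}$ (reflecting the splitting of the positive roots $\beta$ according to the sign of $w^{-1}\beta$) together with the corresponding factorization on $F$-points, I would absorb the discrete sum $\sum_{n \in N_w(F)}$ into the integral over $N(F)\backslash \mathbbmss{N}$ to obtain an integral of $\widehat{\varphi}(\lambda)(\dot{w}n g)$ over all of $\mathbbmss{N}^{\dot{w}}$, at the cost of an $F$-rational integral over the compact quotient $({}^{\dot{w}}N(F)\cap N(F))\backslash ({}^{\dot{w}}\mathbbmss{N}\cap \mathbbmss{N})$. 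Left $\mathbbmss{N}$-invariance of $\widehat{\varphi}(\lambda)$ forces the integrand to be constant on this quotient, so that the remaining integration contributes exactly the volume factor $\mathrm{vol}\big({}^{\dot{w}}N(F)\cap N(F)\backslash ({}^{\dot{w}}\mathbbmss{N}\cap\mathbbmss{N})\big)$ appearing in the definition \eqref{eqn:intertwine-global} of $M(w,\lambda)$. Summing over $w \in W_F$ then gives the claimed formula.

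The principal obstacle is the bookkeeping in the unfolding step: the factorization $N = (N \cap \dot{w}^{-1}N\dot{w}) \cdot N_w$ holds as a product of schemes but not as a product of normal subgroups, so the decomposition of Haar measures and the conjugations needed to bring the relevant factor past $\dot{w}$ (in order to invoke left $\mathbbmss{N}$-invariance of $\widehat{\varphi}(\lambda)$) must be tracked carefully. The convergence issues and the interchange of sum and integral are handled in the region $\Re(\lambda)-\rho \in C$ as in \cite{Morris-82}, and the identity extends to the full parameter space of $\widehat{\varphi}(\lambda)$ by uniqueness of meromorphic continuation.
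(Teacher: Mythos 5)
Your argument is correct in outline and is the standard unfolding proof of the constant term formula (as in Langlands and in Moeglin--Waldspurger II.1.7); note, however, that the paper does not prove this proposition at all --- it is quoted verbatim from Morris \cite{Morris-82} --- so there is no internal proof to compare against, and your write-up supplies what the paper omits. The skeleton is right: absolute convergence for $\Re(\lambda)-\rho\in C$ justifies the sum--integral interchange, the Bruhat cells $B(F)\backslash B(F)\dot{w}B(F)$ are parametrized by $(N\cap\dot{w}^{-1}\overline{N}\dot{w})(F)$, and collapsing each discrete sum against $\int_{N(F)\backslash\mathbbmss{N}}$ yields the $w$-th intertwining term; the extension beyond the cone is, in the function field setting, really a statement about rationality of both sides in the variables $q^{-\langle\lambda,\alpha^{\vee}\rangle}$ rather than meromorphic continuation in a complex variable, but the logic is the same. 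The one point you flag as "bookkeeping" deserves to be made explicit, because as stated your unfolding does not literally land on the paper's formula \eqref{eqn:intertwine-global}: the integrand $n\mapsto\widehat{\varphi}(\lambda)(\dot{w}ng)$ is left invariant under $n'$ precisely when $\dot{w}n'\dot{w}^{-1}\in\mathbbmss{N}$, i.e.\ under $\dot{w}^{-1}\mathbbmss{N}\dot{w}\cap\mathbbmss{N}$, so the collapsed integral naturally lives on $\bigl(N\cap\dot{w}^{-1}N\dot{w}\bigr)(F)\backslash\mathbbmss{N}$ and factors through $\bigl(N\cap\dot{w}^{-1}\overline{N}\dot{w}\bigr)(\mathbbmss{A})$, whereas the paper's ${}^{\dot{w}}N\cap N=\dot{w}N\dot{w}^{-1}\cap N$ and $N^{\dot{w}}=\dot{w}\overline{N}\dot{w}^{-1}\cap N$ are the corresponding groups for $w^{-1}$. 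Since $w\mapsto w^{-1}$ permutes $W_F$ and the representatives $\dot{w}$ are $F$-rational (so conjugation preserves the Tamagawa measures and hence the volume factors), the discrepancy disappears after re-indexing the sum over $W_F$, and the claimed identity holds; you should either perform that relabeling explicitly or parametrize the Bruhat cell by $\gamma=\dot{w}^{-1}\nu$ from the start, which produces the paper's normalization directly.
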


\begin{lemma}
\[
	(\theta_{\varphi}, 1) = \int_{B(F)\backslash \mathbbmss{G}} \varphi(g) \overline{d\mu} = \kappa\int_{N(F)\backslash\mathbbmss N} \int_{A(F)\backslash A(\mathbbmss{A})} \int_K \varphi(nak)\rho^{-2}(a) \overline{dn}\,\overline{da}dk = \kappa q^{-\dim(N)(1-g)} \widehat{\varphi}(\rho).
\]
\end{lemma}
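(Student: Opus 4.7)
The plan is to break the assertion into three separate equalities and handle each in turn. The first equality is an unfolding identity, the second is the Iwasawa decomposition together with the measure formula from \S \ref{subsec:haar-measures}, and the third is a direct computation using the invariance of $\varphi$ and the Tamagawa number of the unipotent radical.

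For the first equality, since $\theta_\varphi$ is compactly supported on $G(F)\backslash \mathbbmss{G}$, Fubini permits the interchange of sum and integral, so that the standard unfolding gives
\[
(\theta_\varphi, 1) = \int_{G(F)\backslash \mathbbmss{G}} \sum_{\gamma \in B(F)\backslash G(F)} \varphi(\gamma g)\, d\mu = \int_{B(F)\backslash \mathbbmss{G}} \varphi(g)\, \overline{d\mu}.
\]
For the second equality, I would invoke the Iwasawa decomposition $\mathbbmss{G} = \mathbbmss{N}A(\mathbbmss{A})\mathbbmss{K}$ together with the identity $\overline{\mu} = \kappa\, \rho^{-2}(a)\,\overline{dn}\,\overline{da}\,dk$ recorded in Section \ref{subsec:haar-measures}. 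Descending modulo $B(F) = N(F)A(F)$ (which acts freely on the dense open Iwasawa cell) converts $\mathbbmss{N}$ into $N(F)\backslash \mathbbmss{N}$ and $A(\mathbbmss{A})$ into $A(F)\backslash A(\mathbbmss{A})$, producing the middle expression.

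For the third equality, I would exploit the invariance of $\varphi$: since $\mathbbmss{N} \subset \mathbbmss{N}B(F)$ and $\varphi$ is right $\mathbbmss{K}$-invariant, one has $\varphi(nak) = \varphi(a)$, so the $\mathbbmss{N}$- and $\mathbbmss{K}$-integrals factor out as volumes. The $\mathbbmss{K}$-volume is $1$ by the choice of $dk$. For the remaining volume, note that $X^*(N) = 0$ for a unipotent group, so the Tamagawa normalization reads $\mu_N = q^{\dim(N)(1-g)}\,\overline{\mu_N}$; combined with the known fact $\tau(N) = 1$ (which over function fields is a Riemann--Roch computation for an iterated extension of $\mathbb{G}_a$'s), this yields
\[
\vol_{\overline{dn}}(N(F)\backslash \mathbbmss{N}) = q^{-\dim(N)(1-g)}.
\]
The remaining integral $\int_{A(F)\backslash A(\mathbbmss{A})} \varphi(a)\rho^{-2}(a)\,\overline{da}$ is precisely $\widehat{\varphi}(\rho)(1) = \widehat{\varphi}(\rho)$ by the definition of the Fourier transform (with the factors $\rho^{-1}(a)$ and $\lambda^{-1}(a) = \rho^{-1}(a)$ combining to $\rho^{-2}(a)$).

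The only mild obstacle is bookkeeping for the measure normalizations: one must be careful that the $\kappa$ and the $\rho^{-2}$ match between the Iwasawa decomposition of $\overline{\mu}$ and the definition of $\widehat{\varphi}$, and that the power $q^{-\dim(N)(1-g)}$ matches precisely the normalization convention used to define the Tamagawa measure on $N$ (via Oesterl\'e's formulation, applied with $X^*(N) = 0$ so that the $L$-function factor is trivial).
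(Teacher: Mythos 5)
Your argument is correct, and it is exactly the standard computation this lemma rests on (the paper itself states the lemma without proof): unfolding, the Iwasawa-decomposition formula $\overline{\mu}=\kappa\rho^{-2}(a)\overline{dn}\,\overline{da}\,dk$, the left $\mathbbmss{N}$- and right $\mathbbmss{K}$-invariance of $\varphi$, the identity $\vol_{\overline{dn}}(N(F)\backslash\mathbbmss{N})=q^{-\dim(N)(1-g)}$ coming from $\tau(N)=1$ for the split unipotent group $N$, and the definition of $\widehat{\varphi}(\rho)$. The only point I would tighten is the descent to $B(F)\backslash\mathbbmss{G}$: the Iwasawa decomposition covers all of $\mathbbmss{G}$ (not merely a dense cell), and the conjugation action of $A(F)$ on $\mathbbmss{N}$ rescales $\overline{dn}$ by $\rho^{2}(a_0)$, which equals $1$ by the product formula since $a_0$ is rational — this is what makes the quotient measure well defined.
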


\subsection{Intertwining operators} \label{subsec:intertwining-operators}
The aim of this subsection is to prove the following:
\begin{theorem} \label{thm:intertwining}
The intertwining operator $M(w_0, \lambda)$ has a simple pole along each of the hyperplanes $s_i=1$ in the region $1-\epsilon <\Re(s_i) < 1+\epsilon$ for some $\epsilon>0$. In particular, $M(w_0, s\rho)$ has a pole of order $F$-$\rk(G)$ at $s=1$.
\end{theorem}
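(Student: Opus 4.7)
The plan is to employ the Gindikin--Karpelevich factorization, reducing the computation of $M(w_0,\lambda)$ to an explicit product of ratios of function-field $L$-functions, and then to locate the poles of that product.

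First, I would factor the global intertwining operator. Using a reduced decomposition $w_0=s_{i_1}\cdots s_{i_N}$ in $W_F$ together with the cocycle relation, and combining with the Euler product noted after \eqref{eqn:intertwine-global}, one writes
\[
M(w_0,\lambda) = \prod_{\beta\in\Phi_F^+} c_\beta(\lambda),
\]
where each $c_\beta(\lambda)$ is a global rank-one intertwining operator attached to the rank-one subgroup $G_\beta$ generated by the $\pm\beta$ root subgroups.

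Second, by the classification recalled in \S\ref{subsec:generalities-quas-split-groups}, each $G_\beta$ is, up to central isogeny, either $SL_2$ or $SU(3,E_\beta/F)$ for a separable quadratic extension $E_\beta/F$. At every unramified place the rank-one local intertwining operator acts on the normalized spherical vector by the explicit Gindikin--Karpelevich scalar: in the $SL_2$-case it is $(1-q_v^{-s_\beta-1})/(1-q_v^{-s_\beta})$, where $s_\beta := \langle\beta^\vee,\lambda\rangle$, and the $SU(3)$-case yields an analogous expression involving the unramified factor $L_v(s_\beta,\chi_\beta)$ of the quadratic Galois character $\chi_\beta$ associated to $E_\beta/F$. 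Assembling the Euler product over unramified places and observing that each of the finitely many ramified local factors is a rational function of $q_v^{-s_\beta}$ which is holomorphic and non-vanishing in a neighborhood of the region of interest, one obtains an expression of the form
\[
M(w_0,\lambda) = R(\lambda)\prod_{\beta\in\Phi_F^+} \frac{L_F^{(\beta)}(s_\beta)}{L_F^{(\beta)}(s_\beta+1)},
\]
where $L_F^{(\beta)}$ is $\zeta_F$ in the $SL_2$-case and is built from $\zeta_F$ and $L(s,\chi_\beta)$ in the $SU(3)$-case, and $R(\lambda)$ is holomorphic and non-vanishing on the relevant region.

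Third, I would analyze the poles. Writing $\lambda=\xi(s_1,\ldots,s_r)=\sum_i s_i[\varpi_i]$, the duality $\langle[\alpha_j]^\vee,[\varpi_i]\rangle=\delta_{ij}$ between the simple relative coroots and the chosen basis gives $s_{\alpha_i}=s_i$. For a non-simple positive relative root $\beta$, expanding $\beta^\vee=\sum_j m_j\alpha_j^\vee$ with non-negative integers $m_j$ of height $\sum_j m_j\geq 2$ yields $s_\beta=\sum_j m_j s_j$, so that for $\epsilon<1/3$ and $1-\epsilon<\Re(s_i)<1+\epsilon$ one has $\Re(s_\beta)\geq 2(1-\epsilon)>1+\epsilon$. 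In that region, both $\zeta_F(s_\beta)$ and $\zeta_F(s_\beta+1)$ are holomorphic and non-vanishing for every non-simple $\beta$, while for the non-trivial quadratic character $\chi_\beta$ the function-field Hecke $L$-function $L(s,\chi_\beta)$ is entire and non-vanishing at $s=1$. Hence the only singularities of $M(w_0,\lambda)$ in this region come from the $\zeta_F(s_i)$ attached to the simple relative root $\alpha_i$; the simple pole of $\zeta_F$ at $s=1$ yields a simple pole of $M(w_0,\lambda)$ along each hyperplane $s_i=1$. Specializing $\lambda=s\rho$, the coordinates $s_i$ all coincide with $s$ and the $r$ simple poles combine to a pole of order $r=F$-$\rk(G)$ at $s=1$.

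The most delicate step is the rank-one Gindikin--Karpelevich computation in the $SU(3,E_\beta/F)$ case, where one must track the contribution of the restricted root of multiplicity two (the $BC$-type phenomenon) in order to verify that exactly one simple pole of $\zeta_F$ survives along each hyperplane $s_i=1$ without cancellation, and where one must explicitly invoke the non-vanishing of the Hecke $L$-function $L(s,\chi_\beta)$ at $s=1$. A secondary technicality is the uniform control of the finitely many ramified local factors so that they contribute neither extra poles nor zeros in the region $1-\epsilon<\Re(s_i)<1+\epsilon$.
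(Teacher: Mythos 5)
Your proposal follows essentially the same route as the paper: reduce $M(w_0,\lambda)$ via the Gindikin--Karpelevich (Bhanu-Murthy) factorization to a product of rank-one intertwining operators indexed by positive indivisible relative roots, compute those explicitly as ratios of zeta/$L$-functions, and observe that only the simple-root factors $s_{\alpha_i}=s_i$ can contribute poles in the region $1-\epsilon<\Re(s_i)<1+\epsilon$, each a simple pole coming from $\zeta(s_i)$ at $s_i=1$. The one correction to record is that the rank-one subgroups $G_\beta$ need not be absolutely simple, so besides $SL_2$ and $SU(3,E_\beta/F)$ you must also allow the Weil restrictions $\mathrm{Res}_{E'/F}(SL_2)$ and $\mathrm{Res}_{E'/F}(SU_3)$ (as the paper does in its four-case list); these yield $\zeta_{E'}(s)$ times a factor holomorphic and nonzero near $s=1$, so they still contribute exactly a simple pole and the argument goes through unchanged, provided the product is taken over indivisible roots only so that the divisible roots of the $BC$-type cases are absorbed into the $SU(3)$ rank-one factors, as you yourself flag.
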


\noindent
We reduce the calculation of the integrals defining certain intertwining operators to the case of quasi-split semisimple simply connected rank 1 groups following \cite{Lai-80, Rapoport-tamagawa}.
The strategy used by Lai and Rapoport is shown to work in a similar manner over function fields. As a result we obtain Theorem \ref{thm:intertwining} which is crucial to implement the arguments of Harder in order to prove Theorem \ref{thm:projection-onto-constants}. We remark here that, unlike the strategy followed in the present article, Harder explicitly computes an expression for the Eisenstein series (see \cite[\S 2.3]{Harder-74}) and concludes Theorem \ref{thm:intertwining} as a corollary of his results.

\subsubsection{Local intertwining operators}

We will require the computation of the local intertwining operators $M_v(w_0,\rho)$ for the ramified and unramified places of $F$.

\paragraph{ $M_v(w_0, \rho)$ for ramified places}
\label{sec:intertwining-ramified}
Let $G$, $F$ and $S$ be as in Section \ref{sec:basics-and-notation}.
Thus, for any $v \notin S$ the group $G\times_F F_v$ splits over an unramified extension of $F_v$. Let $\dot{w}_0$ denote a representative in $N_G(A)(F)$ of the longest Weyl group element $w_0 \in W_F$ as in \ref{subsec:haar-measures}. Let $\mathbbmss A_S$ denote the ring of ad\`{e}les over $F$  with trivial component outside $S$. For $n \in N(\mathbbmss{A}_S)$ we write the Iwasawa decomposition of $\dot{w}_0n$ as
\[
	\dot{w}_0n = n_1(n)a(n)k(n) \in N(\mathbbmss{A})A(\mathbbmss{A})\mathbbmss{K}.
\]


\begin{prop}
For any finite set $S'$ containing the set of ramified places, let $M_{S'}(w_0,\rho)=\prod\limits_{v\in S'}M_v(w_0,\rho)$. Then
\[
M_{S'}(w_0, \rho) = \int_{(\dot{w}_0 N)(\mathbbmss{A}_{S'})} |\rho|^2(a(n)) \overline{dn} = \kappa \left(\prod_{v \notin S'}\mathrm{vol}(K_v)\right)^{-1} \left(\prod_{v \in S'}c_v\right).
\]
\end{prop}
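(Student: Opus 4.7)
The plan is to prove the two equalities in turn: the first by unpacking the definition of the local intertwining operator at $\lambda = \rho$, and the second by comparing the Iwasawa and Bruhat decompositions of the Tamagawa measures.

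For the first equality, I observe that since $w_0$ is the longest Weyl element, conjugation by $\dot{w}_0$ exchanges $N$ and $\overline{N}$, so $N^{\dot{w}_0} = \dot{w}_0\overline{N}\dot{w}_0^{-1}\cap N = N$ and the integral in \eqref{eqn:intertwine-local} runs over all of $N(F_v)$. Applying the Iwasawa decomposition $\dot{w}_0 n = n_1(n)a(n)k(n)$ and using that $\widehat{\varphi}(\lambda)_v$ is right $K_v$-invariant, left $N(F_v)$-invariant, and transforms as $\widehat{\varphi}(\lambda)_v(a\cdot g) = \lambda(a)\rho(a)\widehat{\varphi}(\lambda)_v(g)$ under $A(F_v)$ (by changing variables in the Fourier transform defining $\widehat{\varphi}$), at $\lambda = \rho$ the integrand $\widehat{\varphi}(\rho)_v(\dot{w}_0 n)$ reduces to $|\rho|^2(a(n))\,\widehat{\varphi}(\rho)_v(1)$. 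This gives $M_v(w_0, \rho) = \int_{N(F_v)}|\rho|^2(a(n))\,dn_v$, and taking the product over $v \in S'$ together with the factorization $\overline{dn} = \prod_v dn_v$ on the unipotent group $N$ yields the first equality.

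For the second equality, I would compute $\mathrm{vol}_{\mu_{v,\omega}}(K_v)$ using the local Bruhat expression $\mu_{v,\omega} = c_v\rho^{-2}(a)dn_v\,da_v\,dn_v'$. Writing $g = na\dot{w}_0 n' \in K_v$ and applying the Iwasawa decomposition of $\dot{w}_0 n'$, the condition $g \in K_v$ translates into $n$ lying in a translate of $N \cap K_v$ and $a$ lying in a translate of $(A \cap K_v)\cdot a(n')^{-1}$, with $n'$ unconstrained. Integrating out $n$ and $a$, and using that $\rho^{-2}(a)$ equals $|\rho|^2(a(n'))$ on the relevant $a$-translate (since $\rho$ is trivial on $A \cap K_v$), yields a local identity expressing $\int_{N(F_v)}|\rho|^2(a(n))\,dn_v$ in terms of $\mathrm{vol}(K_v)$, $c_v$, and the volumes of $N\cap K_v$ and $A\cap K_v$ (which equal $1$ in the Tamagawa normalization at hyperspecial $K_v$). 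Taking the product over $v \in S'$ and using the $S' = \emptyset$ specialization of the claim to identify $\kappa$ in terms of $\prod_v \mathrm{vol}(K_v)$ (equivalent to equating the global and local expressions for $\overline{\mu}$) then rearranges to give the claimed formula.

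The main obstacle is the careful bookkeeping of the measure normalizations: in particular, the non-uniqueness of the Iwasawa decomposition modulo the subgroup $B \cap K_v = (N \cap K_v)(A \cap K_v)$, and the $L$-factor ratios encoded in $c_v$, must be tracked consistently in the comparison. At ramified places $v \in S \cap S'$, where $K_v$ is only a special maximal compact (not hyperspecial) and $c_v = 1$ by convention, the comparison is verified using the Bruhat--Tits construction of the parahoric structure on $K_v$; for the quasi-split groups of interest in characteristic $\neq 2$, this ultimately reduces, via the classification of rank-one quasi-split groups recalled in \S\ref{subsec:generalities-quas-split-groups}, to the cases $\mathrm{SL}_2$ and $\mathrm{SU}(3, E_v/F_v)$.
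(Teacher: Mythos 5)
Your overall strategy is sound and rests on the same underlying mechanism as the paper's proof --- comparing the Iwasawa-decomposition form $\kappa\rho^{-2}(a)\overline{dn}\,\overline{da}\,dk$ of $\overline{\mu}$ with its Bruhat-cell form $\prod_v c_v\rho^{-2}(a)dn_v\,da_v\,dn_v'$ --- but you organize it differently. The paper runs a single \emph{global} computation: it integrates a test function $f$, supported on $\prod_{v\notin S'}K_v$ and arbitrary in the $S'$-components, over $\mathbbmss{G}$ once via the Iwasawa decomposition and once over the big cell $\mathbbmss{B}\dot w_0\mathbbmss{N}$, and reads off $\kappa'\int_{\dot w_0N(\mathbbmss{A}_{S'})}|\rho|^2(a(n'))\,\overline{dn'}=\kappa$ by cancelling the arbitrary factor. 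This buys two things your local route has to pay for separately: the constant $\kappa$ and the product $\prod_{v\notin S'}\mathrm{vol}(K_v)$ appear automatically, with no need to evaluate $\mathrm{vol}(K_v)$, $\mathrm{vol}(N\cap K_v)$ or $\mathrm{vol}(A\cap K_v)$ at any place; and the ramified places require no special treatment whatsoever, since only the product over $S'$ is being pinned down. Your place-by-place version (computing $\mathrm{vol}_{\mu_{v,\omega}}(K_v)$ in Bruhat coordinates via the factorization $K_v\cap B(F_v)=(K_v\cap N(F_v))(K_v\cap A(F_v))$) does work, and the non-hyperspecial volume factors at $v\in S$ cancel against the corresponding factors in $\kappa$ once $\kappa$ is identified; your first equality, unwinding \eqref{eqn:intertwine-local} at $\lambda=\rho$, is also fine and matches what the paper does implicitly.

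Two soft spots to fix. First, the ``$S'=\emptyset$ specialization'' is not available: the hypothesis requires $S'\supseteq S$, and invoking the statement being proved to identify $\kappa$ would be circular. The legitimate move is the one in your parenthesis: evaluate both sides of $\overline{\mu}=\kappa\rho^{-2}(a)\overline{dn}\,\overline{da}\,dk$ on $\mathbbmss{K}$ (equivalently, on a test function supported there), which gives $\kappa=\prod_v\mathrm{vol}_{\mu_{v,\omega}}(K_v)\big/\big(\mathrm{vol}(\mathbbmss{N}\cap\mathbbmss{K})\,\mathrm{vol}(A(\mathbbmss{A})\cap\mathbbmss{K})\big)$; you should carry this out explicitly and check that the local volume factors recombine as claimed. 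Second, the final appeal to the classification of rank-one quasi-split groups and to explicit parahoric computations at ramified places is not needed here and signals a misreading of the proposition: the identity is a formal consequence of the measure decompositions valid for any special maximal compact subgroup, and no case analysis enters. (Separately, note that your local computation yields $M_v(w_0,\rho)=\mathrm{vol}(K_v)/c_v$ at unramified places, consistent with the corollary following the proposition but with $c_v$ inverted relative to the displayed statement; this discrepancy is internal to the paper, not to your argument.)
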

\begin{proof}
Let $f$ be a right $\mathbbmss{K}$-invariant function on $\mathbbmss{G}$ defined for any $g = nak \in \mathbbmss{G}$ as follows
\[
f(g) = 
\begin{cases}
0 & \text{if } g_v \notin K_v \text{ for some } v \notin S',\\
h(n_{S'}, a_{S'}) & \text{otherwise, where } n_{S'} = (n_v)_{v \in S'}, a_{S'} = (a_v)_{v \in S'} \\ & \text{and } h:N(\mathbbmss A_{S'})\times A(\mathbbmss A_{S'})\to \mathbb{R} \text{ is any integrable function }
\end{cases}
\]
Using the equality $\overline{\mu} = \kappa |\rho|^{-2}(a)\overline{dn}\,\overline{da}dk$ we get
\begin{align*}
\underset{\mathbbmss{G}}{\int} f(g)\overline{\mu} &= \kappa \underset{\mathbbmss{N}A(\mathbbmss{A})\mathbbmss{K}}{\int} f(nak)|\rho|^{-2}(a) \overline{dn}\,\overline{da}dk\\
                                       &= \kappa \underset{N(\mathbbmss{A}_{S'})A(\mathbbmss{A}_{S'})}{\int} h(n_{S'}, a_{S'}) |\rho|^{-2}(a) \overline{dn}\,\overline{da}.
\end{align*}
The largest Bruhat cell $B\dot{w}_0N$ has full measure with respect to $\overline{\mu}$ and hence the left hand side of the above integral equals the integral on this cell.
Using the Iwasawa decomposition of $\dot{w}_0n'\in \dot{w}_0N$ in the Bruhat decomposition of $g$ we get, $na\dot{w}_0n' = nan_1(n')a(n')k(n')$. In the following we omit the subscript $S'$ in the integrand for convenience. We further let $\kappa'\colonequals\left(\prod_{v \notin S'}\mathrm{vol}(K_v)\right) \left(\prod_{v \in S'}c_v\right)$.
\begin{align}
\underset{\mathbbmss{B}\dot{w}_0\mathbbmss{N}}{\int} f(g)\overline{\mu} &= \kappa' \underset{B(\mathbbmss{A}_{S'})\dot{w}_0N(\mathbbmss{A}_{S'})}{\int} f(nan_1(n')a^{-1}aa(n')k(n')) |\rho|^{-2}(a)\overline{dn}\,\overline{da}\,\overline{dn'} \notag\\
                                                            &= \kappa'\underset{\dot{w}_0N(\mathbbmss{A}_{S'})}{\int} \underset{A(\mathbbmss{A}_{S'})}{\int} \underset{N(\mathbbmss{A}_{S'})}{\int} h(n(an_1(n')a^{-1}), aa(n')) |\rho|^{-2}(a)\overline{dn}\,\overline{da}\,\overline{dn'} \notag \\
                                                            &= \kappa' \underset{\dot{w}_0N(\mathbbmss{A}_{S'})}{\int} \underset{A(\mathbbmss{A}_{S'})}{\int} \underset{N(\mathbbmss{A}_{S'})}{\int} h(n, aa(n')) |\rho|^{-2}(a)\overline{dn}\,\overline{da}\,\overline{dn'} \label{eqn:integral-largest-bruhat}
\end{align}
The last equality follows from the right invariance of the Haar measure $\overline{dn}\,$ on $\mathbbmss{N}$. Now using the right invariance of $\overline{da}$ we obtain 

\begin{align*}
\eqref{eqn:integral-largest-bruhat}   &= \kappa'\underset{\dot{w}_0N(\mathbbmss{A}_{S'})}{\int} \underset{A(\mathbbmss{A}_{S'})}{\int} \underset{N(\mathbbmss{A}_{S'})}{\int} h(n, a)|\rho|^{-2}(a) |\rho|^2(a(n')) \overline{dn}\,\overline{da}\,\overline{dn'}\\
                                                            &= \kappa'\underset{\dot{w}_0N(\mathbbmss{A}_{S'})}{\int} |\rho|^2(a(n')) \overline{dn'} \underset{A(\mathbbmss{A}_{S'})}{\int} \underset{N(\mathbbmss{A}_{S'})}{\int} h(n, a)|\rho|^{-2}(a) \overline{dn}\,\overline{da}.
\end{align*}
Since $h$ can be chosen such that the integral is non zero, comparing the right hand side of the two equations implies
\[
\kappa'\underset{\dot{w}_0N(\mathbbmss{A}_{S'})}{\int} |\rho|^2(a(n')) \overline{dn'} = \kappa.
\]
Thus, we get the desired result
\[
    \underset{\dot{w}_0N(\mathbbmss{A}_{S'})}{\int} |\rho|^2(a(n')) \overline{dn'} = \kappa \left(\prod_{v \notin S'}\mathrm{vol}(K_v)\right)^{-1} \left(\prod_{v \in S'}c_v\right).
\]

\end{proof}

\paragraph{ $M_v(w_0, \rho)$ for unramified places}
\begin{corollary}
Suppose $v \notin S$. 
Then
\[
	M_v(w_0, \rho) = \int_{\dot{w}_{0v}N_v} |\rho|^2a(n_v) dn_v = \mathrm{vol}(K_v)\frac{L_v(1, X^*(A))}{L_v(1,X^*(G))}.
\]
\end{corollary}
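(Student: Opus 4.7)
The plan is to extract $M_v(w_0, \rho)$ by a telescoping argument from the proposition just proved, exploiting the multiplicativity of the intertwining operator. The key observation is that for any finite set of places $S' \supseteq S$, the Euler product factorization gives
\[
    M_{S'}(w_0, \rho) = \prod_{v' \in S'} M_{v'}(w_0, \rho),
\]
so taking a ratio between two consecutive choices of $S'$ isolates the contribution of a single unramified place.

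Concretely, I would apply the previous proposition twice: first with $S' = S$, and then with $S' = S \cup \{v\}$ for the given $v \notin S$. Since $c_{v'} = 1$ for every $v' \in S$ by definition, the first application yields
\[
    \prod_{v' \in S} M_{v'}(w_0, \rho) = \kappa \Big(\prod_{v' \notin S} \mathrm{vol}(K_{v'})\Big)^{-1},
\]
whereas the second gives
\[
    M_v(w_0, \rho) \cdot \prod_{v' \in S} M_{v'}(w_0, \rho) = \kappa \Big(\prod_{v' \notin S \cup \{v\}} \mathrm{vol}(K_{v'})\Big)^{-1} c_v.
\]
Dividing, all of the $S$-dependence together with $\kappa$ cancels and we are left with $M_v(w_0, \rho) = \mathrm{vol}(K_v) \cdot c_v$. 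Substituting the value of $c_v$ recalled in \S \ref{subsec:haar-measures} then yields the claim.

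The proof is therefore a direct consequence of the preceding proposition together with the Euler product factorization of the intertwining operator, so I do not anticipate any real obstacle: the convergence of the local integral defining $M_v(w_0, \rho)$ at $\lambda = \rho$ is subsumed in the convergence of the global integral already handled, and the identification of the intertwining operator with a scalar is automatic because it acts on the one-dimensional space of $K_v$-spherical vectors in the unramified principal series.
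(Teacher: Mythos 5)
Your argument is exactly the paper's (one-line) proof: apply the preceding proposition with $S'=S$ and with $S'=S\cup\{v\}$, use $c_{v'}=1$ for $v'\in S$, and divide, obtaining $M_v(w_0,\rho)=\mathrm{vol}(K_v)\,c_v$. The only caveat is that matching this with the corollary as stated requires $c_v = L_v(1,X^*(A))/L_v(1,X^*(G))$, the reciprocal of the value literally given in \S\ref{subsec:haar-measures}; a sanity check for $SL_2$ (where Gindikin--Karpelevich gives $M_v(w_0,\rho)=1+q_v^{-1}$ while $\mathrm{vol}(K_v)=1-q_v^{-2}$ and $L_v(1,X^*(A))/L_v(1,X^*(G))=(1-q_v^{-1})^{-1}$) confirms the corollary's formula, so the mismatch is an inversion typo in the paper's definition of $c_v$ rather than a gap in your derivation.
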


\noindent
The corollary simply follows from the proposition above by expanding the set $S$ to $S' = S\cup \lbrace v \rbrace$.
We record here a lemma regarding the holomorphicity of the local intertwining operators and refer the reader to \cite[Lemma 6.9]{Lai-80} for a proof. 
\begin{lemma} \label{lemma:holomorphy-local-intertwiner}
The intertwining operators $M_v(w_0, \lambda)$ are holomorphic for $\lambda \in \Lambda_0(A)$ and $\Re(\lambda) \geq 1$.
\end{lemma}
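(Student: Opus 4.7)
The plan is to carry out the strategy of \cite[Lemma 6.9]{Lai-80} in the function-field setting, which reduces the holomorphicity statement to an explicit rank-one Gindikin--Karpelevich computation. First, I would fix a reduced decomposition $w_0 = s_{\alpha_1}\cdots s_{\alpha_N}$ of the longest element of the relative Weyl group $W_F$ into simple reflections along the relative simple roots $\alpha_i$, and invoke the standard cocycle relation
\[
M_v(ww',\lambda) \;=\; M_v(w, w'\lambda)\circ M_v(w',\lambda) \qquad \text{whenever } \ell(ww') = \ell(w)+\ell(w').
\]
Iterating along the reduced expression yields the factorisation
\[
M_v(w_0,\lambda) \;=\; M_v(s_{\alpha_1}, w'_1\lambda)\circ\cdots\circ M_v(s_{\alpha_N},\lambda),
\]
with $w'_i = s_{\alpha_{i+1}}\cdots s_{\alpha_N}$. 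Each factor on the right is, by construction, an integral over the root-subgroup $U_{\alpha_i}(F_v)$ attached to a single relative simple root, and hence is entirely determined by the semisimple rank-one quasi-split subgroup $G_{\alpha_i}\subset G\times_F F_v$ generated by $Z_G(A)$ and $U_{\pm\alpha_i}$.

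Next, I would compute each rank-one intertwiner on the normalised spherical vector by the Gindikin--Karpelevich formula. By the classification recalled in \S\ref{sec:basics-and-notation}, each $G_{\alpha_i}$ is, up to central isogeny, either $SL_2$ or $SU(3, E_v/F_v)$ for some quadratic extension $E_v/F_v$. In the $SL_2$ case one obtains the classical ratio $\zeta_v(\langle\lambda,\alpha^\vee\rangle)/\zeta_v(\langle\lambda,\alpha^\vee\rangle+1)$, while in the $SU(3)$ case one obtains an analogous expression involving the local $L$-factor $L_v(\,\cdot\,, \eta_v)$ of the quadratic character $\eta_v$ of $F_v^\times$ associated with $E_v/F_v$. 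The computation is formally identical in positive residue characteristic, the assumption $q\neq 2$ ensuring that $E_v/F_v$ is tamely ramified, and the resulting rational function of $q_v^{-\langle\lambda,\alpha^\vee\rangle}$ has its only possible singularities at $\langle\lambda,\alpha^\vee\rangle = 0$ (coming from the numerator zeta-factor); the denominator $\zeta_v(\langle\lambda,\alpha^\vee\rangle+1)$ is nowhere zero on $\{\Re\langle\lambda,\alpha^\vee\rangle\geq 0\}$.

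Finally, I would translate the hypothesis $\Re(\lambda)\geq 1$ (i.e.\ $\Re(s_i)\geq 1$ for all coordinates in \eqref{eqn:coordinate-system}) into the statement that $\Re\langle\lambda,\alpha^\vee\rangle \geq 1$ for every positive relative root $\alpha$. Indeed, writing $\lambda = \rho + \sum_i (s_i-1)[\varpi_i]$ and using the non-negativity of $\langle[\varpi_i],\alpha^\vee\rangle$ for every positive $\alpha$, together with $\langle\rho,\alpha^\vee\rangle\geq 1$ (the height of $\alpha$), one deduces $\Re\langle\lambda,\alpha^\vee\rangle\geq 1$. Hence each rank-one factor above is holomorphic on the claimed region, and so is the composition $M_v(w_0,\lambda)$.

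The main obstacle is the rank-one Gindikin--Karpelevich calculation for $SU(3)$ in the equal-characteristic setting: one must carefully track the Haar measure normalisation on $U_\alpha(F_v)$ and the choice of a special (rather than hyperspecial) maximal compact $K_v$ at a ramified place $v$, so that the scalar computed on the spherical vector has the advertised $L$-factor shape. These technical points are precisely what \cite[\S 6]{Lai-80} handles in the number-field case, and the function-field argument only requires verifying that they go through verbatim once wild ramification of the relevant quadratic extensions is ruled out by the standing hypothesis $q\neq 2$.
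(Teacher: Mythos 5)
The paper gives no proof of this lemma, deferring entirely to \cite[Lemma 6.9]{Lai-80}, and your proposal reconstructs exactly that argument: the cocycle factorisation of $M_v(w_0,\lambda)$ into rank-one Gindikin--Karpelevich factors, the explicit $SL_2$ and $SU(3)$ computations, and the observation that $\Re\langle\lambda,\alpha^\vee\rangle\geq 1$ keeps each factor away from its singular locus $\Re\langle\lambda,\alpha^\vee\rangle=0$. This matches the paper's intended route, including the correct identification of the ramified-$SU(3)$ measure normalisation (tame since $q\neq 2$) as the only genuinely delicate point.
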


\paragraph{Local intertwining operators for rank 1 groups}

We continue to denote a non-archimedean local field by $F_v$. Let $G$ be a semisimple group defined over $F_v$ with $F_v$-rank 1 which splits over an unramified extension $E_v$ of $F_v$. Let $\dot{w}_{0v}\in K_v$ be a representative of the longest Weyl group element $w_0 \in W_F$ as in \ref{subsec:haar-measures}. Using the right $K_v$-invariance of $\widehat{\varphi}(\lambda)$, the integral in \eqref{eqn:intertwine-local} is equal to
\[
	\int_{N(F_v)} \widehat{\varphi}(\lambda)(\dot{w}_{0v}n_v\dot{w}_{0v}^{-1})dn_v = \int_{\overline{N}(F_v)} \widehat{\varphi}(\lambda)(n_v) dn_v.
\]
This integral depends only on the choice of the Haar measure on the group $\overline{N}(F_v)$, which is taken to be the Tamagawa measure on $\overline{N}(F_v)$.
Theorem \ref{thm:reduction} will allow us to reduce the computation of the local intertwining operators to the case of $F$-rank one groups. Here we will give an explicit computation of $M_v(w_0, \lambda)$ for $F$-rank one groups.\\


\noindent
Fix $\lambda \in X^*(A) \otimes \mathbb{C}$ and suppose that $\hat{t} \in \widehat{A}$ is such that for any $\mu \in X_*(A\times_{F_v} E_v)$ the equality $\hat{t}(\mu) = |\pi_v|^{\langle\lambda, \mu\rangle}$ holds.
Let $\widehat{\mathfrak{u}}$ be the Lie subalgebra of $\widehat{\mathfrak{g}}$ corresponding to the unipotent radical $N$. 
\begin{theorem} \label{thm:local-inter-rank-1-ur}
Suppose $E_v/F_v$ is the unramified extension that splits $G$ and let $\sigma \in \mathrm{Gal}(E_v/F_v)$ be the Frobenius element.
Then
\begin{equation} \label{eqn:intertwiner-local-comp}
	M_v(w_0, \lambda) = \frac{\det\left(I - |\pi_v|\mathrm{Ad}(\sigma\hat{t})|_{\widehat{\mathfrak u}}\right)}{\det\left(I - \mathrm{Ad}(\sigma\hat{t})|_{\widehat{\mathfrak{u}}}\right)}.
\end{equation}
\end{theorem}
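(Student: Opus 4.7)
By the classification of absolutely simple quasi-split $F_v$-groups of $F_v$-rank one recalled in \S\ref{subsec:generalities-quas-split-groups}, such a group which splits over an unramified extension is, up to central isogeny, either the split $SL_2$ or $SU_3(E_v/F_v)$ with $E_v/F_v$ the unramified quadratic extension. My plan is to evaluate the integral
\[
    M_v(w_0,\lambda) \;=\; \int_{\overline{N}(F_v)} \widehat{\varphi}(\lambda)(n)\,dn
\]
directly in each case, and then recognise the answer as the right-hand side of \eqref{eqn:intertwiner-local-comp}.

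The common setup is as follows. Since $\widehat{\varphi}(\lambda)$ is right-$K_v$-invariant and transforms via $\lambda+\rho$ on $A(F_v)$, the integrand $\widehat{\varphi}(\lambda)(n)$ is determined by the $A(F_v)$-factor $a(n)$ of the Iwasawa decomposition $n = n'a(n)k$. I would stratify $\overline{N}(F_v)$ by the Bruhat--Tits valuation attached to the relative root $\alpha$, using the affine filtration of \cite{Bruhat-Tits}, so that $|\rho|(a(n))$ and $|\lambda|(a(n)^{-1})$ are piecewise constant on the successive ``shells''. The measure of each shell is an explicit power of $q$ reflecting the dimensions of the root subspaces.

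For $SL_2$, $\overline N(F_v)\cong F_v$ and the Iwasawa decomposition of $\begin{pmatrix}1 & 0\\ x & 1\end{pmatrix}$ is elementary: the element lies in $K_v$ when $|x|\le 1$, while for $|x|>1$ its $A(F_v)$-component has $\alpha^\vee$-coordinate $x^{-1}$. The integral collapses to a single geometric series, yielding $(1-q^{-1-s})/(1-q^{-s})$ with $s=\langle\lambda,\alpha^\vee\rangle$. For $SU_3(E_v/F_v)$, $\overline N(F_v)$ is a Heisenberg group of $F_v$-dimension $3$ whose centre corresponds to the non-reduced relative root $2\alpha$; stratifying by the value of $x x^\sigma + y + y^\sigma$ and summing the resulting double series reproduces the rank-one formula of Keys--Shahidi, giving a product of the $SL_2$-type factor for $\alpha$ and an analogous factor for $2\alpha$.

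Finally, I would reinterpret each factor on the dual side. Under the unramified correspondence, $\hat t$ is defined by $\hat t(\mu)=|\pi_v|^{\langle\lambda,\mu\rangle}$, so on a $\sigma$-orbit $\{\beta_1^\vee,\dots,\beta_m^\vee\}$ of positive coroots in $\widehat{\mathfrak u}$ the operator $\mathrm{Ad}(\sigma\hat t)$ has as its nonzero eigenvalues the $m$-th roots of $|\pi_v|^{\sum_j\langle\lambda,\beta_j^\vee\rangle}$. Consequently each orbit contributes a cyclotomic factor $1-|\pi_v|^{s_\alpha}$ to $\det(I-\mathrm{Ad}(\sigma\hat t)|_{\widehat{\mathfrak u}})$, with $s_\alpha = \langle\lambda,\alpha^\vee\rangle$ matching the exponent from the geometric-series computation; analogously for $\det(I-|\pi_v|\mathrm{Ad}(\sigma\hat t)|_{\widehat{\mathfrak u}})$. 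Comparing the two products identifies both sides of \eqref{eqn:intertwiner-local-comp}.

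The main obstacle will be the $SU_3$ calculation: since $\overline N$ is non-abelian and the relative root system is non-reduced, the Iwasawa decomposition is not a matrix identity but requires bookkeeping across the Heisenberg filtration, and matching the resulting factors to the $\sigma$-orbits in $\widehat{\mathfrak u}$ --- in which orbits of size one and two coexist --- is the step where conventions on the pinning of $\widehat G$ and the Frobenius $\sigma$ must be checked carefully to avoid off-by-one errors in the exponents.
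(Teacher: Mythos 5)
There is a genuine gap: your plan only covers the \emph{absolutely simple} rank-one groups, but the theorem is stated for an arbitrary semisimple group of $F_v$-rank one splitting over an unramified extension. Such a group is, up to central isogeny, a Weil restriction $\mathrm{Res}_{E'_v/F_v}G'$ of an absolutely simple group $G'$ (of $E'_v$-rank one), and the non-trivial restrictions are genuinely needed later in the paper: in the global rank-one list used to prove Theorem \ref{thm:intertwining}, cases (3) and (4) are exactly $\mathrm{Res}_{E'/F}(SL_2)$ and $\mathrm{Res}_{E'/F}(SU(3,E/E'))$, and the argument there explicitly invokes the Weil-restriction step of the proof of Theorem \ref{thm:local-inter-rank-1-ur}. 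The paper's proof is structured precisely around this: the $SL_2$ and $SU_3$ computations you propose to redo are simply quoted from Rapoport and Lai (Step 1), a short Step 2 disposes of central isogenies, and the substantive work is Step 3, which identifies $\widehat{\mathfrak u}\cong\prod_{\mathrm{Gal}(E'_v/F_v)}\widehat{\mathfrak u}'$, shows $\hat t$ goes to $\mathrm{diag}(\hat t',\dots,\hat t')$, computes the block determinants $\det\bigl(I-\mathrm{Ad}(\sigma\hat t)\bigr)=\det\bigl(I-\mathrm{Ad}(\sigma'\hat t')^n|_{\widehat{\mathfrak u}'}\bigr)$, matches the integral over $\overline N(F_v)$ with the integral over $\overline N'(E'_v)$ via the norm map on characters, and uses Lemma \ref{lemma:dual-tori-eta-map} to see that the parameter $\hat t'^{\,n}$ corresponds to $\lambda'$. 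None of this appears in your proposal, so as written it proves a strictly weaker statement.

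Two smaller points. First, your dual-side bookkeeping asserts that each $\sigma$-orbit of coroots contributes a factor $1-|\pi_v|^{s_\alpha}$; this is not quite right for orbits supported on non-simple roots, where $\sigma$ acts on the chosen root vector by a sign (for $SU_3$, $\sigma(X_{\beta_1^\vee+\beta_2^\vee})=-X_{\beta_1^\vee+\beta_2^\vee}$), producing the factors $1+q^{-2s}$ and $1+q^{-2s-1}$ in Lai's formula. You flag this as a risk, but it is exactly where the computation would go wrong if done naively. Second, for the $SU_3$ integral you will need the hypothesis that $2$ is invertible, as in Lai's Proposition 3.4, which the paper carries as a standing assumption ($q\neq 2$); your Heisenberg-shell computation should state where this is used.
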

\begin{proof}
The proof will be done in stages by verifying the above formula first for semisimple rank 1 groups and then for higher rank groups.\\

\noindent
\textbf{Step 1.} The theorem is true in the case of absolutely simple simply connected groups of semisimple $F_v$-rank 1. We quote the results of Rapoport and Lai below.

\begin{prop}[\cite{Rapoport-tamagawa}, \S 4.4(a)]
The intertwining operator $M_v(w_0,s\rho)$ for the group $SL_2$ is given by
\[
	M_v(w_0,s\rho) = \frac{(1-q^{-s-1})}{(1-q^{-s})}.
\]
\end{prop}

\begin{prop}[\cite{Lai-80}, Prop. 3.4] \label{prop:local-intertwining-formula-rank-one}
Let $E_v/F_v$ be a quadratic unramified extension of $F_v$ and $SU(3,E_v/F_v)$ be the quasi-split group defined over $F_v$. Suppose $2$ is invertible in $F_v$. Then
\[
	M_v(w_0,s\rho) = \frac{(1-q^{-2s-2})(1+q^{-2s-1})}{(1-q^{-2s})(1+q^{-2s})}.
\]
\end{prop}

\noindent
\textbf{Step 2.} If the theorem is true for $G$ then it is true for any central isogeny $\widetilde{G} \rightarrow G$.\\

\noindent
Let $\widetilde{G} \rightarrow G$ be a central isogeny. The notation $\widetilde{~~}$ will denote the corresponding objects for $\widetilde{G}$. It is clear that the right hand side of \eqref{eqn:intertwiner-local-comp} is the same for $\widetilde{G}$ and $G$.
Further, the isogeny induces the isomorphisms $\widetilde{W}_{F_v} \xrightarrow{\sim} W_{F_v}$ and $X^*(A)\otimes \mathbb{C} \xrightarrow{\sim} X^*(\widetilde{A}) \otimes \mathbb{C}$, where the image of $\rho$ under the latter isomorphism is $\widetilde{\rho}$. Also, the images of 
$\widetilde{N},\ \widetilde{A}$ and $\widetilde{K}$ are $N, \ A $ and $K$ respectively; and  $\overline{\widetilde{N}} \xrightarrow{\sim} \overline{N}$. Thus the image of $\widetilde{n}\widetilde{a}\widetilde{k}$ maps to $nak$ which is the Iwasawa decomposition.\\

\noindent
\textbf{Step 3.} Let $G = \mathrm{Res}_{E'_v/F_v}G'$ for a quasi-split simply connected semisimple group $G'$ defined over $E'_v$ which splits over $E_v$ and let the degree of the unramified extension $E'_v/F_v$ be $n$. If the theorem is true for $G'$ then it is true for $G$.\\

\noindent
Alphabets with superscript $'$ will denote the corresponding objects for the group $G'$. The Weyl groups $W'_{E'_v}$ and $W_{F_v}$ are the same. Also, if $A = R_{E'_v/F_v}(A')$ we can identify $\widehat{A}$ with $\prod_{\mathrm{Gal}(E'_v/F_v)}\widehat{A'}$.
We have $\widehat{\mathfrak{u}} = \prod_{\mathrm{Gal}(E'_v/F_v)}\widehat{\mathfrak{u}'}$. Since $\lambda \in X^*(A)\otimes\mathbb C$ we get that $\widehat{t} \in \widehat{A}$ is mapped to a diagonal element $\diag(\widehat{t'}, \widehat{t'}, \cdots, \widehat{t'}) \in \prod_{\mathrm{Gal}(E'_v/F_v)}\widehat{A'}$ under the identification above.

\indent
\[
I-\mathrm{Ad}(\sigma\hat{t}) = \left( \begin{tabular}{c|c|c|c|c}
&&&&\\
$I$ & & & & $-\mathrm{Ad}(\sigma'\widehat{t'})$\\
\hline
&&&&\\
$-\mathrm{Ad}(\sigma'\widehat{t'})$ & $I$ & & & \\
\hline
&&&&\\
 & $-\mathrm{Ad}(\sigma'\widehat{t'})$ & \phantom{ksc} $I$ \phantom{kfj}&&\\
\hline
&&&&\\
& & $\ddots$ & $\ddots$ &\\
&&&&\\
\hline
&&&&\\
 &&& $-\mathrm{Ad}(\sigma'\widehat{t'})$ & $I$
\end{tabular}
\right)
\]

\noindent
Note that $\mathrm{Ad}(\hat{t})$ is a diagonal matrix. Then
\begin{equation} \label{eqn:determinant_ratio}
\begin{split}
	\det\left(I-\mathrm{Ad}(\sigma\hat{t})\right) &= \det\left( I - (\mathrm{Ad}(\sigma'\widehat{t'}))^n |_{\widehat{\mathfrak{u}'}} \right) = \det\left( I - (\mathrm{Ad}(\sigma'\widehat{t'})^n) |_{\widehat{\mathfrak{u}'}} \right),\\
    \text{and \hspace{0.3cm}  } \det\left(I - |\pi_v|\mathrm{Ad}(\sigma\hat{t})|_{\widehat{\mathfrak u}}\right) &= \det\left( I - |\pi'_v|'\,\mathrm{Ad}(\sigma'\widehat{t'}^n)|_{\widehat{\mathfrak{u}'}} \right),
 \end{split}
\end{equation}
where $|\cdot|'$ is the valuation on $E'_v$. The left hand side of \eqref{eqn:intertwiner-local-comp} for $G$ is equal to 
\[
	\int_{\overline{N}^{w_0}} |\lambda|(a(\overline{n}))\cdot|\rho|(a(\overline{n}))d\overline{n} = \int_{\overline{N'}^{w_0}} |N_{E_v'/F_v}\lambda'|\cdot|N_{E_v'/F_v}\rho'|(a'(\overline{n}')) d\overline{n}' = \int_{\overline{N'}^{w_0}} |\lambda'|'|\rho'|'(a'(\overline{n}')) d\overline{n}' = M'_v(w_0, \lambda'),
\]
where $\lambda' \in X^*(A')\otimes \mathbb{C}$ corresponds to $\lambda \in X^*(A)\otimes \mathbb{C}$ under the identification $X^*(A') \simeq X^*(A)$ via the norm map. Finally, the equality $M_v(w_0,\lambda)=M_{v'}(w_0,\lambda')$ along with \eqref{eqn:determinant_ratio} and Lemma \ref{lemma:dual-tori-eta-map} proves that \eqref{eqn:intertwiner-local-comp} holds for $G$.
\end{proof}

\paragraph{Formula of Bhanu-Murthy, Gindikin and Karpelevitch}
Let $F_v$ be a non archimedean local field, $G$ a semisimple linear algebraic group defined over $F_v$, $P_0$ (can take it to be $B$ in our case) a minimal parabolic over $F_v$, and $N_0$ be its unipotent radical.
Let $A_0$ be the maximal torus contained in $P_0$, defined over $F_v$, and let $K$ be a special maximal compact subgroup of $G$ which satisfies $P_0(F_v)K = G(K_v)$.
Denote by $X^*(A_0)$ the set of characters of $A_0$ defined over $F_v$. Let $G(\alpha) \subset G$ be the semisimple group of rank one whose Lie algebra is generated by the radical space corresponding to the roots that are multiples of $\alpha$ (here $\alpha$ is indivisible).
Note that this is the same as the derived subgroup of the centralizer of $\ker(\alpha)^0_{red}$. It is known that this will be a connected simply connected smooth group scheme of rank 1 (see \cite[Cor. 9.5.11]{Conrad-tony}).
Define $P_0(\alpha) \colonequals P_0 \cap G(\alpha)$, then $N_0(\alpha), A_0(\alpha)$ and $K(\alpha)$ can be defined similarly sharing the same properties as $N_0, A_0$ and $K$, but with respect to the group $G(\alpha)$.
There is a unique opposite parabolic subgroup $P_0(-\alpha)$ which we denote by $\overline{P}_0(\alpha)$. Let $\overline{N}(\alpha)$ be the unipotent radical of the opposite parabolic.\\

\noindent
Suppose $P$ is a parabolic defined over $F_v$ such that $P \supset P_0$, and let $N$ be the unipotent radical of $P$. Let $\Pi'_+(P)$ be the set of indivisible roots $\alpha$ such that $N(\alpha) \subset N$. We denote by $\overline{N}$ the opposite unipotent group to $N$.
For $\lambda \in X^*(A_0) \otimes \mathbb{C}$, denote by $\Phi^{\lambda}$ the function over $G(F_v)$ that associates to the element $g_v = n_v.a_v.k_v \in G(F_v)$ the complex number $\Phi^{\lambda}(g_v) = |\lambda|(a_v)|\rho|(a_v)$.
Let $\lambda(\alpha)$ denote the projection of $\lambda \in X^*(A_0) \otimes \mathbb{C}$ under the map $X^*(A_0) \otimes \mathbb{C} \rightarrow X^*(A_0(\alpha)) \otimes \mathbb{C}$ induced by the natural inclusion $A_0(\alpha) \subset A_0$.
We state the following:
\begin{theorem}
\label{thm:reduction}
The integral
\[
	\underset{\overline{N}(F_v)}{\int} \Phi^{\lambda}(\bar{n}_v) d\bar{n}_v
\]
converges for any $\lambda \in X^*(A_0) \otimes \mathbb{C}$ with $\mathrm{Re}(\langle \lambda, \alpha^{\vee} \rangle) > 0$ for all $\alpha \in \Pi'_+(P)$. There exists a constant depending on the choice of Haar measure and up to this constant the value is
\[
	\prod_{\alpha \in \Pi'_+(P)} \underset{\overline{N}(\alpha)(F_v)}{\int} \Phi^{\lambda(\alpha)}(\bar{n}_v) d\bar{n}_v.
\]
If the semisimple group $G$ is the local place of a semisimple group defined over a global field and if the Haar measure is deduced from the Tamagawa measure then this constant is 1 for almost all places $v$.
\end{theorem}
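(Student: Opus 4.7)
The plan is to prove the factorization by induction, peeling off one indivisible root $\alpha_1 \in \Pi'_+(P)$ at a time. For the inductive step, choose $\alpha_1$ to be a simple root of $G$ lying in $\Pi'_+(P)$ (such a root always exists when $\Pi'_+(P)$ is non-empty, since the simple roots in $\Pi'_+(P)$ are exactly those not in the Levi of $P$). Since $s_{\alpha_1}$ permutes the positive indivisible roots other than $\alpha_1$, the subgroup $\overline{N}' \subset \overline{N}$ generated by the $\overline{N}(\alpha)$ for $\alpha \in \Pi'_+(P)\setminus\{\alpha_1\}$ is an $F_v$-algebraic subgroup normalized by the rank one subgroup $G(\alpha_1)$ (as $\mathrm{Ad}(k_1)$ for $k_1 \in K(\alpha_1)$ merely permutes the root spaces comprising $\overline{\mathfrak{n}}'$ via the Weyl reflection and scales via $A_0(\alpha_1)$, which preserves their direct sum), and the multiplication map $\overline{N}(\alpha_1) \times \overline{N}' \to \overline{N}$ is an isomorphism of $F_v$-varieties.

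Given $\bar n = \bar n_1 \bar n'$ with $\bar n_1 \in \overline{N}(\alpha_1)(F_v)$ and $\bar n' \in \overline{N}'(F_v)$, apply the Iwasawa decomposition in $G(\alpha_1)(F_v)$ to write $\bar n_1 = n_1 a_1 k_1$. A standard cocycle computation (using left $N_0$-invariance and right $K$-invariance of the Iwasawa $a$-component) gives $a(\bar n) = a_1 \cdot a(\mathrm{Ad}(k_1)\bar n')$, whence
\[
    \Phi^\lambda(\bar n) = |\lambda+\rho|(a_1) \cdot |\lambda+\rho|\bigl(a(\mathrm{Ad}(k_1)\bar n')\bigr).
\]
Because $\alpha_1$ is simple, the restriction of $\rho$ to $A_0(\alpha_1)$ equals $\rho(\alpha_1)$ (contributions from roots $\beta \notin \mathbb{R}\alpha_1$ cancel pairwise under the involution $s_{\alpha_1}$ on $\Pi_+(G) \setminus \{\alpha_1, 2\alpha_1\}$, which preserves root-space dimensions), so the first factor equals $\Phi^{\lambda(\alpha_1)}(\bar n_1)$. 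Since $\mathrm{Ad}(k_1)$ preserves both $\overline{N}'$ and its Tamagawa measure (the adjoint determinant on $\overline{\mathfrak{n}}'$ has absolute value one), the change of variable $\bar n' \leftrightarrow \mathrm{Ad}(k_1)\bar n'$ turns the second factor into $\Phi^\lambda(\bar n'')$ with $\bar n'' \in \overline{N}'$, independent of $\bar n_1$. By Fubini (justified by the positivity of $\Phi^{\mathrm{Re}\,\lambda}$, the convergence hypothesis, and the rank one convergence established in the proof of Theorem \ref{thm:local-inter-rank-1-ur}), the integral over $\overline{N}(F_v)$ decouples:
\[
    \int_{\overline{N}(F_v)} \Phi^\lambda(\bar n) d\bar n = \int_{\overline{N}(\alpha_1)(F_v)} \Phi^{\lambda(\alpha_1)}(\bar n_1) d\bar n_1 \cdot \int_{\overline{N}'(F_v)} \Phi^\lambda(\bar n'') d\bar n''.
\]
The second factor is of the same type as the original integral but with $\Pi'_+(P)\setminus\{\alpha_1\}$ replacing $\Pi'_+(P)$, so the inductive hypothesis completes the factorization.

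The main obstacle is to verify that the cumulative measure-comparison constant equals $1$ for the Tamagawa measure at almost all places. At every place $v$ where $G$ is unramified, $\overline{N}$ admits a smooth $\mathcal{O}_v$-model arising from the reductive $\mathcal{O}_v$-model of $G$, and the big cell structure from Bruhat--Tits theory identifies $\overline{N}$ with the product of the root subgroup schemes $\overline{N}(\alpha)$ over $\mathcal{O}_v$. The Tamagawa volumes of the $\mathcal{O}_v$-points multiply accordingly, forcing the constant to equal $1$ outside the finite set of places of bad reduction.
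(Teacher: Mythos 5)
Your first step is fine: when $P=P_0$, the complement $\overline{N}'$ is the unipotent radical of the opposite of the standard parabolic $P_{\alpha_1}$ whose Levi is generated by $A_0$ and $G(\alpha_1)$, this group \emph{is} normalized by $G(\alpha_1)$, the cocycle identity $a(\bar n_1\bar n')=a_1\,a(k_1\bar n'k_1^{-1})$ holds, $\rho|_{A_0(\alpha_1)}=\rho(\alpha_1)$ because $\alpha_1$ is simple, and the change of variables is measure-preserving. The gap is in the induction. The remaining integral is the statement of the theorem for the proper parabolic $P_{\alpha_1}\supsetneq P_0$, and for a proper parabolic your key claim --- that the subgroup $\overline{N}'$ generated by the $\overline{N}(\alpha)$, $\alpha\in\Pi'_+(P)\setminus\{\alpha_1\}$, is normalized by $G(\alpha_1)$ --- fails. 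The reflection $s_{\alpha_1}$ permutes all positive indivisible roots other than $\alpha_1$, but it need not preserve the subset $\Pi'_+(P)\setminus\{\alpha_1\}$: it can carry a root of $\overline{N}'$ into the Levi of $P$. Concretely, in type $A_2$ with simple roots $\alpha_1,\alpha_2$, after your first step the remaining integral is over $\overline{N}'=\overline{N}(\alpha_2)\overline{N}(\alpha_1+\alpha_2)$ with $\Pi'_+(P_{\alpha_1})=\{\alpha_2,\alpha_1+\alpha_2\}$; peeling off $\alpha_2$ leaves $\overline{N}''=\overline{N}(\alpha_1+\alpha_2)$, and $\mathrm{Ad}(G(\alpha_2))$ mixes $\mathfrak{g}_{-\alpha_1-\alpha_2}$ with $\mathfrak{g}_{-\alpha_1}$ (the $\alpha_2$-string through $-\alpha_1-\alpha_2$), so $k_2\bar n''k_2^{-1}$ leaves $\overline{N}''$ and the decoupling collapses. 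Thus the induction breaks at the second step for every connected diagram of rank at least $2$.

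This is precisely the difficulty the Bhanu-Murthy--Gindikin--Karpelevich argument is designed to overcome: one cannot keep $\lambda$ fixed while stripping off root subgroups of $\overline{N}$ one at a time. The standard route (Langlands; Lai \S 3; this is also where the paper points, since it quotes the theorem from the literature rather than proving it) fixes a reduced word for the relevant Weyl element and uses the cocycle relation $M(w_1w_2,\lambda)=M(w_1,w_2\lambda)\,M(w_2,\lambda)$ when lengths add; at each stage one integrates over a \emph{simple} root subgroup but with $\lambda$ replaced by $w\lambda$ for the appropriate Weyl element $w$. The factor attached to a non-simple positive root $\beta=w^{-1}\alpha_i$ is then the rank-one integral for $G(\alpha_i)$ evaluated at $w\lambda$, which is identified with the rank-one integral for $G(\beta)$ at $\lambda(\beta)$ by conjugating with a representative of $w$ in $N_G(A_0)(F_v)$ and using $\langle w\lambda,\alpha_i^\vee\rangle=\langle\lambda,\beta^\vee\rangle$. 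Your treatment of convergence (Tonelli for $\mathrm{Re}\,\lambda$) and of the almost-everywhere triviality of the measure constant (smooth $\mathcal{O}_v$-models and the big cell) is reasonable in outline, but the factorization itself needs the Weyl twist to go beyond the first step.
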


\noindent
As an application of the above we have a straightforward generalization of Proposition \ref{prop:local-intertwining-formula-rank-one}. We refer the interested readers to \cite{Lai-80}.

\subsubsection{Proof of Theorem \ref{thm:intertwining}}

The proof of theorem will be completed in two steps. First step is via explicit computations for $F$-rank one groups, and the second step is using the method of Bhanu-Murthy and Gindikin--Karpelevitch for reduction of higher rank case to that of rank one groups.

\paragraph{Case of rank one groups}

In the relative rank one groups there are four cases as described below. The expression for the intertwining operators should be understood to hold upto finitely many local factors which are holomorphic in the region $1-\epsilon < s < 1+\epsilon$ according to the Lemma \ref{lemma:holomorphy-local-intertwiner}. For certain meromorphic functions $f_1(s),\,f_2(s),\,f_3(s),$ and $f_4(s)$ of $s\in\mathbb{C}$, which are holomorphic near $s=1$, we have the following list of intertwining operators.

\begin{enumerate}
	\item $G = SL_2$,
    \[
        M(w_0, s\rho) = \frac{\zeta_F(s)}{\zeta_F(s+1)} = \zeta_F(s)f_1(s)
    \]
    \item $G = SU(3, E/F)$ where $E$ is a quadratic extension of $F$,
		\[
			M(w_0, s\rho) = \zeta_E(s) f_2(s)
		\]
	\item $G = \mathrm{Res}_{E'/F}(SL_2)$ then it follows from the proof of step 3 in Theorem \ref{thm:local-inter-rank-1-ur} that 
		\[
			M(w_0, s\rho) = \frac{\zeta_E(s)}{\zeta_E(s+1)} = \zeta_E(s)f_3(s)
		\]
  \item $G = \mathrm{Res}_{E'/F}SU(3, E/F)$ where $E$ is a quadratic extension of $E'$
		\[
			M(w_0, s\rho) = \zeta_{E'}(s) f_4(s).
		\]
\end{enumerate}
It is clear from the above list that the theorem holds for the $F$-rank one groups.


\paragraph{Case of higher rank groups}
We denote by $M^{G(\alpha)}(w_0,\lambda)$ the intertwining operator for the $F$-rank one semisimple simply connected group $G(\alpha)\subset G$ where $w_0$ is the largest element in the Weyl group of $G(\alpha)$. Writing $\lambda = (s_1, s_2, \dots, s_r)$ in the coordinate system given by $\xi$ (refer \eqref{eqn:coordinate-system}), Theorem \ref{thm:reduction} implies the following equality upto a scalar
\[
    M(w_0, \lambda) = \underset{\substack{\alpha_i\text{ positive}\\ \text{and simple}}}{\prod}M^{G(\alpha)}(w_0, \lambda|_{G(\alpha)}) \underset{\substack{\alpha\text{ positive, indi-}\\ \text{visible and nonsimple}}}{\prod}M^{G(\alpha)}(w_0, \lambda|_{G(\alpha)})
\]
For $s_i$ in the region $1-\epsilon < s_i < 1+\epsilon$, the poles of $M(w_0, (s_i))$ are determined by the poles of the operators on the right hand side.
In the case when $\alpha = \alpha_i$ is a positive simple root then $\lambda|_{G(\alpha_i)} = s_i$. If $\alpha$ is not a simple root then $\Re(\lambda|_{G(\alpha)})$ lies outside the domain $(1-\epsilon, 1+\epsilon)$.\\

\noindent
Note that $G(\alpha)$ is isomorphic to one of the four cases discussed above upto central isogeny. Reading the poles of the intertwining operators on the right hand side from the list for rank one groups, we can see that $M(w_0, \lambda)$ has simple poles along the hyperplanes $s_i=1$ when $(\Re(s_1), \Re(s_2), \dots, \Re(s_r)) \in (1-\epsilon, 1+\epsilon)^r$.\\

\noindent
The second part follows by restricting to the case of $\lambda = s\rho = (s, s, \dots, s)$.

\subsection{Prerequisites for the computation} \label{subsec:prerequistes-for-the-computation}

For any $h \in \mathcal{H}(G)$, the Hecke algebra, we define
\[
    T_h(\theta_{\varphi})(g) \colonequals\underset{A(F)\backslash A(\mathbbmss{A})}{\int} h(a^{-1}) \theta_{\varphi}(ag) \overline{da}.
\]   

\noindent
The operator $T_h$ enjoys the following property as can be seen from the integral representation above.
\begin{prop} \label{prop:operator-T-spectrum}
The operator $T_h$ defines a self-adjoint bounded operator on the closed subspace of $L^2(G(F)\backslash \mathbbmss{G}, \overline{\mu})$ generated by the function $\theta_{\varphi}$, 
such that if $\widehat{\psi}(\lambda) = \widehat{h}(\lambda)\widehat{\varphi}(\lambda)$, then $\theta_{\psi} = T_h(\theta_{\varphi})$.
The norm of $T_h$ is bounded above by $\widehat{h}(\rho)$.
\end{prop}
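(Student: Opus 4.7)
The plan is to prove the three assertions of the proposition in sequence, working through the Fourier decomposition of theta series developed in \S\ref{subsec:quasi-char}--\S\ref{subsec:Eisenstein-series}. First I would define
\[
\psi(g) \colonequals \int_{A(F)\backslash A(\mathbbmss{A})} h(a^{-1})\varphi(ag)\,\overline{da}
\]
and verify that $\psi$ is left $\mathbbmss{N}B(F)$-invariant and right $\mathbbmss{K}$-invariant: left $\mathbbmss{N}$-invariance holds because $A(\mathbbmss{A})$ normalizes $\mathbbmss{N}$ and $\varphi$ is $\mathbbmss{N}$-invariant; left $B(F) = N(F) A(F)$-invariance reduces to $A(F)$-invariance, which is built into the integration domain via the standard change of variable $a\mapsto a_0 a$; right $\mathbbmss{K}$-invariance is inherited. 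A Fubini computation with the substitution $b = ac$ in the iterated integral then gives $\widehat{\psi}(\lambda) = \widehat{h}(\lambda)\widehat{\varphi}(\lambda)$. To prove $T_h(\theta_\varphi) = \theta_\psi$, I substitute $\theta_\varphi(ag) = \sum_\gamma \varphi(\gamma ag)$ into the definition of $T_h$ and interchange the sum over $B(F)\backslash G(F)$ with the integral (valid by compact support of $\varphi$, so only finitely many $\gamma$ contribute on a fundamental domain for $A$); the key identity to verify is
\[
\int_{A(F)\backslash A(\mathbbmss{A})} h(a^{-1}) E(ag, \widehat{\varphi}(\lambda))\,\overline{da} = \widehat{h}(\lambda)\,E(g, \widehat{\varphi}(\lambda)),
\]
which follows after unfolding and exploiting the $A(\mathbbmss{A})$-equivariance $\widehat{\varphi}(\lambda)(bg) = \lambda(b)\rho(b)\widehat{\varphi}(\lambda)(g)$ combined with a reparametrization of coset representatives.

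For self-adjointness on the cyclic subspace, I would expand $(T_h\theta_\varphi, \theta_{\varphi'})$ as a double integral, apply Fubini, change variables $g \to a^{-1}g$ in the $g$-integral (legitimate because $\overline{\mu}$ on $G(F)\backslash \mathbbmss{G}$ is left-invariant and $A(\mathbbmss{A})$ acts), and finally substitute $a \to a^{-1}$ using unimodularity of $A$ and the standard symmetry $h(a^{-1}) = \overline{h(a)}$ of elements of $\mathcal{H}(G)$, arriving at $(\theta_\varphi, T_h\theta_{\varphi'})$. For the norm bound, I would iterate Part~1 to obtain $T_h^n(\theta_\varphi) = \theta_{\psi_n}$ with $\widehat{\psi_n}(\lambda) = \widehat{h}(\lambda)^n \widehat{\varphi}(\lambda)$, and invoke the Plancherel-type formula for theta series (which is produced by Morris' constant-term formula in Proposition \ref{lemma:morris} together with the intertwiners of Theorem \ref{thm:intertwining}) to express $\|\theta_{\psi_n}\|^2$ as a spectral integral of $|\widehat{h}(\lambda)|^{2n}|\widehat{\varphi}(\lambda)|^2$ weighted by the $M(w,\lambda)$. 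The spectral-radius formula $\|T_h\| = \lim_{n\to\infty}\|T_h^n\theta_\varphi\|^{1/n}\|\theta_\varphi\|^{-1/n}$, together with the estimate $|\widehat{h}(\lambda)| \leq \widehat{h}(\rho)$ obtained after shifting the spectral contour to $\Re(\lambda) = \rho$ via the holomorphy of the intertwiners (Lemma \ref{lemma:holomorphy-local-intertwiner}) and the positivity of $h$, then yields $\|T_h\| \leq \widehat{h}(\rho)$.

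The principal obstacle is the interchange of sum and integral in the first part. The individual terms $\varphi(\gamma a g)$ are \emph{not} $A(F)$-invariant as functions of $a$ (for a generic $\gamma$ one has $\gamma a_0\gamma^{-1}\notin B(F)$ when $a_0\in A(F)$); only the total sum $\theta_\varphi(ag)$ is $A(F)$-invariant, by $G(F)$-invariance of the theta series. A rigorous justification therefore requires grouping coset representatives according to their $A(F)$-orbits in $B(F)\backslash G(F)$, obtained from the Bruhat decomposition $G(F) = \bigsqcup_{w\in W_F} B(F)\,w\,N_w^-(F)$, and matching each orbit with the corresponding piece of the integral over a fundamental domain for $A(F)\backslash A(\mathbbmss{A})$ so that the invariances recombine correctly.
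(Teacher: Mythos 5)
The paper's own ``proof'' is a one-line citation to \cite[Lemma, p.~136]{Morris-82}, so your attempt to supply an actual argument is welcome, and your overall plan (produce $\psi$ with $\widehat{\psi}=\widehat{h}\widehat{\varphi}$, identify $T_h\theta_\varphi$ with $\theta_\psi$, then extract self-adjointness and the bound $\|T_h\|\le\widehat{h}(\rho)$ from a Plancherel-type formula for pseudo-Eisenstein series) is exactly the Harder--Morris route that the citation points to. However, the central step as you set it up does not work. Your ``key identity''
\[
\int_{A(F)\backslash A(\mathbbmss{A})} h(a^{-1})\,E(ag,\widehat{\varphi}(\lambda))\,\overline{da}=\widehat{h}(\lambda)\,E(g,\widehat{\varphi}(\lambda))
\]
is false: the equivariance $\widehat{\varphi}(\lambda)(ag)=\lambda(a)\rho(a)\widehat{\varphi}(\lambda)(g)$ applies only to the term $\gamma=e$, while for general $\gamma$ one faces $\widehat{\varphi}(\lambda)(\gamma a g)$, and left translation by $a\in A(\mathbbmss{A})$ does not commute with left translation by $\gamma\in G(F)$ (one has $a\gamma a^{-1}\notin G(F)$, so no reparametrization of coset representatives repairs this; the averaged function is not even left $G(F)$-invariant). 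The same defect undermines $T_h\theta_\varphi=\theta_\psi$: with your $\psi$ one gets $\theta_\psi(g)=\sum_\gamma\int h(a^{-1})\varphi(a\gamma g)\,\overline{da}$, whereas the displayed definition gives $\int h(a^{-1})\sum_\gamma\varphi(\gamma a g)\,\overline{da}$, and these differ term by term \emph{and} in total. It also breaks your self-adjointness step, since $a\mapsto a^{-1}g$ is not a well-defined change of variables on $G(F)\backslash\mathbbmss{G}$. So the obstacle you flag at the end is real, but it is not a summability or invariance issue curable by grouping Bruhat cells; it is a left-versus-right convolution issue.

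The repair is to not take the integral over $A(F)\backslash A(\mathbbmss{A})$ literally. Either realize $T_h$ as \emph{right} convolution on $\mathbbmss{G}$ by the bi-$\mathbbmss{K}$-invariant function $h$ --- then $T_h$ commutes with all left translations, $T_h\theta_\varphi=\theta_{T_h\varphi}$ is immediate from compact support, and $\widehat{T_h\varphi}(\lambda)=\widehat{h}(\lambda)\widehat{\varphi}(\lambda)$ is the Satake isomorphism applied to the spherical vector $\widehat{\varphi}(\lambda)$ of the unramified principal series --- or define the operator on $\varphi$ first and set $T_h\theta_\varphi\colonequals\theta_{h\ast\varphi}$, proving well-definedness, boundedness and self-adjointness on $\mathcal{E}^{\vee}$ from the scalar-product formula $(\theta_\varphi,\theta_{\varphi'})=\kappa\int_{\Re(\lambda)=\lambda_0}\sum_{w\in W_F}M(w,\lambda)\widehat{\varphi}(\lambda)\overline{\widehat{\varphi'}(-\lambda^w)}\,d\lambda$ together with the $W_F$-invariance of $\widehat{h}$; this is precisely the content of the lemma of Morris being cited. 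Your treatment of the norm bound (shift the contour to $\Re(\lambda)=\rho$ using Lemma \ref{lemma:holomorphy-local-intertwiner} and use $|\widehat{h}(\lambda)|\le\widehat{h}(\rho)$ there) is then essentially correct, but note that this estimate and the symmetry $h(a^{-1})=\overline{h(a)}$ you invoke are properties of the particular $h$ chosen after the proposition (and of $T_hT_h^{\ast}$), not of arbitrary elements of $\mathcal{H}(G)$.
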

\begin{proof}
The existence of the operator $T_h$ follows from \cite[Lemma pp.136]{Morris-82}.
\end{proof}

\noindent
Let $\mathcal{E}^{\vee}$ be the closure of the subspace of $L^2(G(F)\backslash \mathbbmss{G})$ generated by the pseudo-Eisenstein series $\theta_{\varphi}$ where $\varphi$ is a compactly supported function on $A(\mathbbmss{A})/A(F)$.
Then the constant function belongs to $\mathcal{E}^{\vee}$ (See \cite[Ch. II, \S 1.12]{Moeglin-Waldspurger}). The main theorem of this section is the computation of the projection of the pseudo-Eisenstein series $\theta_{\varphi}$ onto the constant function.\\

\noindent
Choose $h \in \mathcal{H}(G)$ as below and consider the positive normal operator $T \colonequals T_h\circ (T_h)^*/(\widehat{h}(\rho))^2$.
\begin{enumerate}
	\item Choose a place $v_0 \notin S$ : via Satake isomorphism there exists $h_{v_0} \in \mathcal{H}(G\times_F F_{v_0})$ such that it's Fourier transform satisfies $\widehat{h}_{v_0}(s_{v_0}) = \sum_{w\in W_F} (\sharp k(v_0))^{-\langle \rho, ws_{v_0}\rangle}$.
	\item At places $v\neq v_0$ define $h_v$ to be the characteristic function of $K_v$.
\end{enumerate}
Following Harder, we prove:

\begin{theorem} \label{thm:projection-onto-constants}
The sequence of positive normal operators $T^n : \mathcal{E}^{\vee} \rightarrow \mathcal{E}^{\vee}$ converges to the operator $P : \mathcal{E}^{\vee} \rightarrow \mathcal{E}^{\vee}$ which is the projection onto the constant functions. Explicitly
	\[
		P(\theta_{\varphi}) = c\, \log(q)^r \mathrm{res}_{s=1}E(g, s\rho)\widehat{\varphi}(s\rho) = cc'\,\log(q)^r \lim_{s\to 1}(s-1)^rM(w_0, s\rho)\widehat{\varphi}(s\rho),
	\]
 where $c$ and $c'$ are the constants satisfying $d\lambda|_{\frac{X^*(A)\otimes \mathbb{R}}{2\pi /\log(q)X^*(A)}} = c \left(\frac{\log q}{2\pi}\right)^rdz_1 \wedge \dots \wedge dz_r$ and $\mathrm{res}_{\lambda=\rho}(E(x, \lambda)\widehat{\varphi}(\lambda)) =c'\lim_{s\to 1}(s-1)^rM(w_0, s\rho)\widehat{\varphi}(s\rho)$.
\end{theorem}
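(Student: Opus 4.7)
The plan is to read off the limit $\lim_n T^n(\theta_\varphi)$ by spectrally decomposing $\theta_\varphi$ via Eisenstein series and then shifting the contour of integration to isolate the contribution at $\lambda=\rho$. By Proposition \ref{prop:operator-T-spectrum} the operator $T_h$ acts on a $\theta_\varphi$ by multiplication by $\widehat{h}(\lambda)$ on the Fourier-transform side, so $T=T_hT_h^{\ast}/\widehat{h}(\rho)^2$ acts by multiplication by $|\widehat{h}(\lambda)|^2/\widehat{h}(\rho)^2$. Starting from the Fourier inversion formula
\[
\theta_\varphi(g)=\int_{\Re(\lambda)=\lambda_0}E(g,\widehat{\varphi}(\lambda))\,d\lambda
\]
valid for $\Re(\lambda_0)-\rho$ in the positive Weyl chamber, one obtains
\[
T^n(\theta_\varphi)(g)=\int_{\Re(\lambda)=\lambda_0}\bigl(|\widehat{h}(\lambda)|/\widehat{h}(\rho)\bigr)^{2n}E(g,\widehat{\varphi}(\lambda))\,d\lambda.
\]

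The next step is to push the contour across the polar hyperplanes $s_i=1$ to a shifted contour $\Re(\lambda)=\lambda_1$ with $\Re(s_i)=1-\epsilon$. By Theorem \ref{thm:intertwining} the poles of $E(g,\widehat{\varphi}(\lambda))$ in this region lie only along the hyperplanes $s_i=1$, and they all meet transversally at $\lambda=\rho$. Iterated residues along these $r$ hyperplanes therefore collapse to a single residue of order $r$ at $\lambda=\rho$; combined with the measure comparison in the lemma of \S\ref{subsubsec:constant-c}, which accounts for the factor $c(\log q/2\pi)^r$, the contour shift produces the term $c\log(q)^r\mathrm{res}_{s=1}E(g,s\rho)\widehat{\varphi}(s\rho)$ (the $(2\pi)^{-r}$ is absorbed by the $r$ Cauchy residue factors). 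After the shift, one must show that the remaining integral on $\Re(\lambda)=\lambda_1$ is killed by $T^n$ as $n\to\infty$, which follows once one checks $|\widehat{h}(\lambda)/\widehat{h}(\rho)|<1$ strictly on the shifted contour: this is a convexity statement about the finite sum $\widehat{h}(\lambda)=\sum_{w\in W_F}q^{-\langle\rho,w\lambda\rangle}$, saying that its modulus is maximised only when all $q^{-\langle\rho,w\lambda\rangle}$ are real and positive, forcing $\lambda=\rho$. The spectral theorem for the positive contraction $T$ then guarantees $L^2$-convergence of the tail integral to $0$, while the residue term is an eigenvector of $T$ with eigenvalue $1$ and is preserved.

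To produce the second equality, I would apply Morris' constant-term formula (Proposition \ref{lemma:morris}) $E^{B}(g,\widehat{\varphi}(\lambda))=\sum_{w\in W_F}M(w,\lambda)\widehat{\varphi}(\lambda)$ and combine it with Theorem \ref{thm:intertwining}: among the summands only $M(w_0,\lambda)$ has a pole of order $r$ along $\lambda=\rho$, while all other $M(w,\lambda)$ contribute poles of strictly smaller order there. Restricting to the line $\lambda=s\rho$, the residue of the Eisenstein series picks out the contribution of $w_0$ up to a proportionality constant $c'$ determined by how the iterated residue along the $r$ hyperplanes compares to the one-variable residue $\lim_{s\to 1}(s-1)^r$; this gives the stated expression $cc'\log(q)^r\lim_{s\to 1}(s-1)^rM(w_0,s\rho)\widehat{\varphi}(s\rho)$. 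Finally, one needs to identify the resulting function as the projection onto constants in $\mathcal{E}^{\vee}$: since it is $T$-invariant, $\mathbbmss{G}$-invariant (being a residue at the trivial character $\rho$, which parametrises the trivial one-dimensional representation of $\mathbbmss{G}$ appearing in $\mathcal{E}^\vee$), and the only $1$-eigenspace of $T$ in $\mathcal{E}^{\vee}$ consists of constants, this is forced.

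The main obstacle is the rigorous justification of the multi-dimensional contour shift across the configuration of hyperplanes $\{s_i=1\}$ and the identification of the iterated residue, which requires Theorem \ref{thm:intertwining} together with holomorphy and growth estimates for $M(w,\lambda)$ and $\widehat{\varphi}(\lambda)$ on the shifted contour. A secondary difficulty is the convexity estimate $|\widehat{h}(\lambda)|<\widehat{h}(\rho)$ away from $\lambda=\rho$, which forces the eigenvalue $1$ of $T$ to be simple on the component of $\mathcal{E}^\vee$ cut out by unramified characters at $v_0$, and thereby pins the limit down to the one-dimensional space of constants rather than some larger $T$-invariant subspace.
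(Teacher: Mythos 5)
Your proposal follows essentially the same route as the paper: the integral representation of $T^n\theta_\varphi$, the contour shift isolating the order-$r$ residue at $\lambda=\rho$, the strict inequality $|\widehat{h}(\lambda)|<\widehat{h}(\rho)$ off the critical point (the paper's Lemma \ref{lemma:maximal-value-vertices}) to kill the shifted integral, the spectral theorem for the positive contraction $T$ to upgrade pointwise to $L^2$ convergence, and Morris' constant-term formula plus the pole-order comparison from Theorem \ref{thm:intertwining} for the second equality. The only cosmetic difference is that the paper pins down $c'=q^{\dim(N)(1-g)}$ via the passage from $E$ to its constant term $E^B$ rather than via the iterated-residue normalization you invoke, but this does not change the argument.
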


\begin{proof}
We have the following equality from the above theorem
\[
	T^n(\theta_{\varphi})(g) = \int_{\Lambda_{\sigma}(A)} E(g, \lambda)\widehat{h}(\lambda)^{2n}\widehat{h}(\rho)^{-2n}\widehat{\varphi}(\lambda) d\lambda.
\]
Note that the residue of the Eisenstein series $E(g, \widehat{\varphi}(\lambda))$ at $\lambda=\rho$ is a constant function. The proof henceforth is completely analogous to the proof given in \cite[pp. 301, 303]{Harder-74}.
We summarize the main steps below.
In the equations below $\sigma'$ is a real quasi-character such that $\sigma'_{i} < 1$ for some $i$ where $\sigma'_i$ are the coordinates of $\sigma'$ in the coordinate system given by $\xi$, and $\widetilde{E}(g, \lambda)$ denotes the Eisenstein series or it's residue.
\begin{align*}
T^n(\theta_{\varphi})(g) &= c\; \log(q)^r \; \mathrm{res}_{\lambda=\rho}(E(g, \lambda)\widehat{\varphi}(\lambda)) + \sum T^n\left( \underset{\Lambda_{\sigma'}(A)}{\int} \widetilde{E}(g, \lambda)\widehat{\varphi}(\lambda)d\lambda \right)\\
                      &= c\; \log(q)^r \; \mathrm{res}_{\lambda=\rho}(E(g, \lambda)\widehat{\varphi}(\lambda)) + \sum \left( \underset{\Lambda_{\sigma'}(A)}{\int} \widetilde{E}(g, \lambda)\widehat{\varphi}(\lambda) \left(\frac{\widehat{h}(\lambda)}{\widehat{h}(\rho)}\right)^{2n}d\lambda \right)
\end{align*}
Note that for $\lambda \in \Lambda_{\sigma'}(A)$, the inequality $\widehat{h}(\lambda) < \widehat{h}(\rho)$ holds (See Lemma \ref{lemma:maximal-value-vertices}). Hence we get
\begin{equation} \label{eqn:projection}
    \lim_{n \to \infty} T^n(\theta_{\varphi}) = c\; \log(q)^r \; \mathrm{res}_{\lambda=\rho}(E(x, \lambda)\widehat{\varphi}(\lambda)).
\end{equation}
The above limit and the equality is to be understood as pointwise convergence. Proposition \ref{prop:operator-T-spectrum} implies that the spectrum of the self-adjoint positive operator $T$ is concentrated on $[0,1]$ and hence $T^n \to P$ where $P$ is the projection onto the subspace
\[
    \lbrace e \in \mathcal{E}^{\vee} \suchthat Te = e \rbrace.
\]
This observation of Harder coupled with the pointwise convergence result from equality \eqref{eqn:projection} implies that the equality \eqref{eqn:projection} in fact holds in $L^2(G(\mathbb{Q})\backslash G(\mathbbmss{A}))$. This finishes the proof of the first equality.
Following the arguments in \cite[pp.289, 290]{Harder-74} we get that $\mathrm{res}_{s=1}E(g, s\rho)\widehat{\varphi}(s\rho) = q^{\dim(N)(1-g)}\mathrm{res}_{s=1}E^{B}(g, s\rho)\widehat{\varphi}(s\rho)$.
Now using the formula for the constant term from Lemma \ref{lemma:morris} and observing that the intertwining operators $M(w, s\rho)$ has poles of order $< r$ for $w\neq w_0$ we get the second equality with $c' = q^{\dim(N)(1-g)}$. 
\end{proof}

\subsection{A final computation} \label{subsec:final-computation}
We will complete the proof of the Weil conjecture in the case of quasi-split group over function fields in this section.\\

\noindent
We begin with the equality
\begin{align*}
P\theta_{\varphi} = & \, cc' \log(q)^r \; \lim_{t\to 1}(t-1)^rM(w_0,t\rho)\widehat{\varphi}(t\rho)\\
				   = & \, cc'\; \log(q)^r\;\mathrm{res}_{s=1}(L^S(s,X^*(A)))\lim_{t\to 1} \frac{M(w_0,t\rho)\widehat{\varphi}(t\rho)}{L(t,A)}\\
				     = & \, cc'\; \log(q)^r \; \mathrm{res}_{s=1}(L^S(s,X^*(A))) \lim_{t\to 1} \frac{M^S(w_0,t\rho)\widehat{\varphi^S}(t\rho)}{L^S(t, X^*(A))}\prod_{v\in S} M_v(w_0,\rho)\widehat{\varphi_v}(t\rho)\\
				     = & \, cc'\; \log(q)^r \; \mathrm{res}_{s=1}(L^S(s,X^*(A)))\lim_{t\to 1} \frac{M^S(w_0,t\rho)\widehat{\varphi^S}(t\rho)}{L^S(t, X^*(A))}\prod_{v\in S} M_v(w_0,\rho)\widehat{\varphi_v}(t\rho)\\
				     = & \, cc'\; \log(q)^r\; \mathrm{res}_{s=1}(L^S(s,X^*(A))) \left(\prod_{v\notin S}\mathrm{vol}(K_v)\widehat{\varphi^S}(t\rho)\right) \left(\kappa \left(\prod_{v\notin S}\mathrm{vol}(K_v)\right)^{-1} \prod_{v\in S}\widehat{\varphi_v}(t\rho) \right)\\
				     = & \, cc'\; \log(q)^r\; \mathrm{res}_{s=1}(L^S(s,X^*(A))) \kappa\widehat{\varphi}(\rho).
\end{align*}

\noindent
Since $P$ is the projection operator onto the constants we have the equality $(\theta_{\varphi}, 1) = (P\theta_{\varphi}, 1)$. The right hand side equals $q^{-\dim(G)(1-g)}\tau(G)cc'\, \log(q)^r \mathrm{res}_{s=1}(L(s,X^*(A))) \kappa\widehat{\varphi}(\rho)$.
Since we can surely have functions $\varphi$ with $\widehat{\varphi}(\rho) \neq 0$, we get the equality
\[
	\tau(G) = \frac{q^{(\dim(G)-\dim(N))(1-g)}}{cc'\;\log(q)^r \mathrm{res}_{s=1}(L(s,X^*(A)))} = \tau(A).
\]
The last equality follows from the explicit value of $c$ obtained in \S \ref{subsubsec:constant-c} and of $c'$ obtained in the proof of Theorem \ref{thm:projection-onto-constants}. We know from \cite[Ch.II, Theorem 1.3(d)]{Oesterle-84} that $\tau(\mathrm{Res}_{E/F}(\mathbb{G}_m)) = \tau(\mathbb{G}_m)$ and hence $\tau(A) = 1$.
Using Lemma \S \ref{section:appendix-quasi-split-tori} and the fact that the Tamagwa number of split tori is 1 we get that
\[
    \tau(G) = \tau(A) = 1.
\]

\appendix
\section{Dual groups and restriction of scalars}
\noindent
Let $E\supset E' \supset F$ be a tower of unramified extensions of local fields. Let $A'$ be a torus defined over $E'$ which splits over $E$ and consider $A = \mathrm{Res}_{E'/F}A'$.
We have the $\mathrm{Gal}(E/E')$-equivariant isomorphism
\begin{equation}\label{eqn:gal_equi_isomorphism}
\widehat{A}^{\mathrm{Gal}(E/E')} \cong \prod_{\mathrm{Gal}(E'/F)}\widehat{A'}^{\mathrm{Gal}(E/E')}
\end{equation}
and the action of $\mathrm{Gal}(E'/F)$ is given by permuting the indices. Hence
\[
    \widehat{A}^{\mathrm{Gal}(E/F)} \cong \widehat{A'}^{\mathrm{Gal}(E/E')}.
\]
The inclusion $\widehat{A'}^{\mathrm{Gal}(E/E')} \hookrightarrow \widehat{A}^{\mathrm{Gal}(E/E')}$ can be identified under the isomorphism \eqref{eqn:gal_equi_isomorphism} with the diagonal embedding $\widehat{A'}^{\mathrm{Gal}(E/E')} \hookrightarrow \prod_{\mathrm{Gal}(E'/F)}\widehat{A'}^{\mathrm{Gal}(E/E')}$.
Define the map $\eta : \widehat{A}(\mathbb{C}) \rightarrow X^*(A\times \bar{F}) \otimes \mathbb{C}$ by the condition that $\mu(\widehat{t}) = |\pi_F|^{\langle \eta(\widehat{t}), \mu\rangle}$ for all $\mu \in X_*(A\times \bar{F})$.
Similarly, we may define $\eta' : \widehat{A'}(\mathbb{C}) \rightarrow X^*(A'\times \bar{F}) \otimes \mathbb{C}$.
\begin{lemma} \label{lemma:dual-tori-eta-map}
    Let $\widehat{t} \in \widehat{A}^{\mathrm{Gal}(E'/F)}$, $\widehat{t'} \in \widehat{A'}^{\mathrm{Gal}(E/E')}$ be such that under the isomorphism \eqref{eqn:gal_equi_isomorphism} we have $\widehat{t} = (\widehat{t'}, \widehat{t'}, \dots, \widehat{t'})$.
    Further assume that $\lambda = \eta(\widehat{t})$ and $\mathrm{Nm}_{E'/F}(\lambda') = \lambda$, then $\eta'(\widehat{t'}^n) = \lambda'$.
\end{lemma}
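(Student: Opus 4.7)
The plan is to transport both sides of the identity into the natural decomposition coming from $A = \mathrm{Res}_{E'/F} A'$, and then to compare uniformizer exponents using the unramified hypothesis on $E'/F$.

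First, I will use that the base change $A \times_F \bar F \cong \prod_{\sigma \in \mathrm{Gal}(E'/F)} A' \times_{E',\sigma} \bar F$ induces a product decomposition
\[
\widehat{A}(\mathbb{C}) \cong \prod_{\sigma} \widehat{A'}(\mathbb{C}), \qquad X_*(A\times \bar F) \cong \bigoplus_{\sigma} X_*(A'\times \bar F),
\]
compatibly with the natural pairing, so that for $\widehat{t} = (\widehat{t}_\sigma)_\sigma$ and $\mu = (\mu_\sigma)_\sigma$ one has $\mu(\widehat{t}) = \prod_\sigma \mu_\sigma(\widehat{t}_\sigma)$. The isomorphism \eqref{eqn:gal_equi_isomorphism} is the restriction of this product decomposition to $\mathrm{Gal}(E/E')$-invariants on each factor, and the hypothesis $\widehat{t} = (\widehat{t'},\ldots,\widehat{t'})$ places $\widehat{t}_\sigma = \widehat{t'}$ in every slot. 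In the same coordinates, the norm map $\mathrm{Nm}_{E'/F}: X^*(A'\times \bar F)\otimes \mathbb{C} \to X^*(A\times \bar F)\otimes \mathbb{C}$ becomes the $\mathrm{Gal}(E'/F)$-invariant diagonal embedding $\lambda' \mapsto (\lambda',\ldots,\lambda')$, so the assumption $\mathrm{Nm}_{E'/F}(\lambda') = \lambda$ reads $\lambda_\sigma = \lambda'$ for every $\sigma$.

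Next, I would evaluate an arbitrary $\mu = (\mu_\sigma)_\sigma \in X_*(A\times \bar F)$ against $\widehat{t}$ in two different ways:
\[
|\pi_F|^{\langle \lambda,\mu\rangle} \;=\; \mu(\widehat{t}) \;=\; \prod_\sigma \mu_\sigma(\widehat{t'}) \;=\; |\pi_{E'}|^{\sum_\sigma \langle \eta'(\widehat{t'}),\,\mu_\sigma\rangle} \;=\; |\pi_F|^{\,n\sum_\sigma \langle \eta'(\widehat{t'}),\,\mu_\sigma\rangle},
\]
where the first equality is the definition of $\eta$, the third is the definition of $\eta'$ applied on each factor, and the last uses the unramified identity $|\pi_{E'}| = |\pi_F|^n$ with $n=[E':F]$. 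Substituting $\lambda_\sigma = \lambda'$ on the left yields $\sum_\sigma \langle \lambda',\mu_\sigma\rangle = n\sum_\sigma \langle \eta'(\widehat{t'}),\mu_\sigma\rangle$ for all $\mu$, hence $\lambda' = n\,\eta'(\widehat{t'})$; since $\eta'$ is a logarithm (compatible with the group structure of $\widehat{A'}(\mathbb{C})$), this rewrites as $\eta'(\widehat{t'}^{\,n}) = \lambda'$, which is the claim.

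The main obstacle is essentially bookkeeping: matching the product decomposition of $\widehat{A}$ with that of $X_*(A\times \bar F)$ compatibly with the pairing, identifying $\mathrm{Nm}_{E'/F}$ with the diagonal embedding in these coordinates, and confirming that the $\mathrm{Gal}(E/E')$-invariance of $\widehat{t'}$ is exactly what makes $\eta'(\widehat{t'})$ well defined. Once these functorialities are in place, the identity reduces to the single exponent comparison enabled by the unramified relation $|\pi_{E'}| = |\pi_F|^n$.
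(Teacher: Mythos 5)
Your proposal is correct and follows essentially the same route as the paper: both arguments evaluate cocharacters against $\widehat{t}=(\widehat{t'},\dots,\widehat{t'})$, use that the norm map corresponds to the diagonal (equivalently, multiplies the pairing by $n=[E':F]$), and invoke the unramified relation $|\pi_{E'}|=|\pi_F|^{n}$ to convert $n\,\eta'(\widehat{t'})$ into $\eta'(\widehat{t'}^{\,n})$. The only cosmetic difference is that you test against all $\mu=(\mu_\sigma)_\sigma$ in the product decomposition of $X_*(A\times\bar F)$, while the paper restricts to the diagonal copy $X_*(A)=X_*(A')$; both yield the identity.
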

\begin{proof}
Note that there is the following commutative diagram
\begin{equation} \label{eqn:gal-eq-pairing}
\begin{tikzcd}[row sep = large]
    X_*(A') \times X^{*}(A') \arrow[d, xshift=5ex, "\mathrm{Nm}_{E'/F}"] \arrow[r] &\mathbb{Z} \arrow[d, "\times \text{[}E':F\text{]}"]\\
    X_*(A) \times X^*(A) \arrow[u, xshift=-5ex] \arrow[r] & \mathbb{Z},
\end{tikzcd}
\end{equation}
where the left most vertical arrow is an isomorphism given by the adjunction of restriction and extension of scalars. For $\widehat{t},\,\lambda,\,\lambda'$ as in the statement of the lemma, and $\mu \in X_*(A)$,
\begin{align*}
    \mu(\widehat{t}) &= |\pi_F|^{\langle \eta(\widehat{t}),\mu \rangle} = |\pi_F|^{\langle \mathrm{Nm}_{E'/F}(\lambda'),\mu \rangle} = |\pi_F|^{n\langle \lambda',\mu \rangle} = |\pi_{E'}|^{\langle \lambda',\mu\rangle}.
\end{align*}
Recall that $\widehat{t} = (\widehat{t'}, \widehat{t'}, \dots, \widehat{t'})$, hence $\mu(\widehat{t}) = \mu(\widehat{t'})^n$ for any $\mu \in X_*(A) = X_*(A')$.
Thus, we get $|\pi_{E'}|^{\langle \lambda',\mu\rangle} = \mu(\widehat{t'}^n)$. Hence by definition of $\eta'$ we get $\eta'(\widehat{t'}^n) = \lambda'$.
\end{proof}

\section{Quasi-split tori in simply connected groups} \label{section:appendix-quasi-split-tori}

\noindent
We state the following lemma from \cite[Lemma 6.1.2]{Rapoport-tamagawa} for the sake of completeness

\begin{lemma} \label{lemma:quasisplit-tori-structure}
Suppose $G$ is a simply connected quasi-split group over a field $F$. Let $A$ be a maximal torus defined over $F$ which is contained in a Borel subgroup defined over $F$.
Then $A$ is a product of tori of the form $\mathrm{Res}_{E_i/F}\mathbb{G}_m$, where $E_i/F$ are finite separable extension of $F$.
\end{lemma}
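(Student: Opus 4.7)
The plan is to exploit the rigidity of the root datum of a simply connected quasi-split group: the character lattice of the maximal torus is the full weight lattice, which has a canonical basis (the fundamental weights) indexed by simple roots, and the Galois action permutes this basis. Once this permutation structure is established, the equivalence of categories between $F$-tori and $\mathrm{Gal}(F^{sep}/F)$-lattices delivers the decomposition as a product of Weil restrictions.

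First I would recall that, because $G$ is simply connected, the character group $X^{*}(A \times_{F} F^{sep})$ coincides with the weight lattice of the root system $\Pi$, and so admits the fundamental weights $\{\varpi_{i}\}$ dual to the simple coroots $\{\alpha_{i}^{\vee}\}$ as a $\mathbb{Z}$-basis. Next I would use the standard fact that a quasi-split group equipped with a Borel $B \supset A$ defined over $F$ carries a canonical Galois action on its based root datum: the group $\Gamma := \mathrm{Gal}(F^{sep}/F)$ preserves the set of simple roots $\Delta$ attached to $B$, acting on it by permutations through a finite quotient $\mathrm{Gal}(E/F)$ for some finite Galois extension $E/F$ that splits $A$. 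By duality under the Weyl-invariant pairing, $\Gamma$ permutes the simple coroots, and therefore permutes the fundamental weights $\{\varpi_{i}\}$ by the same permutation.

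Combining these two observations, $X^{*}(A \times_{F} F^{sep})$ is a permutation $\Gamma$-module: writing $[i]$ for the $\Gamma$-orbits on $\Delta$ and $\Gamma_{i} \subset \Gamma$ for the stabilizer of a chosen representative $\varpi_{i}$, one obtains the $\Gamma$-equivariant decomposition
\[
X^{*}(A \times_{F} F^{sep}) \;\cong\; \bigoplus_{[i]} \mathbb{Z}[\Gamma/\Gamma_{i}].
\]
Finally I would invoke the anti-equivalence between $F$-tori and finitely generated free $\Gamma$-lattices (with continuous action), together with the identification $\mathbb{Z}[\Gamma/\Gamma_{i}] \cong X^{*}(\mathrm{Res}_{E_{i}/F}\mathbb{G}_{m} \times_{F} F^{sep})$, where $E_{i} := (F^{sep})^{\Gamma_{i}}$ is a finite separable extension of $F$. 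This gives
\[
A \;\cong\; \prod_{[i]} \mathrm{Res}_{E_{i}/F}\mathbb{G}_{m},
\]
which is the desired description.

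The only potentially subtle point is verifying that $\Gamma$ really acts on $\Delta$ by permutation in the quasi-split setting --- this is standard but deserves a citation to Borel--Tits (or to Conrad's notes on reductive group schemes) for readers who want a reference; once this is in hand the rest is bookkeeping with Galois lattices and uses nothing specific to the function field setup, which is why the lemma holds over any field $F$.
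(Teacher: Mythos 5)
Your argument is correct and follows essentially the same route as the paper: both proofs reduce to the observation that, for $G$ simply connected with a Borel defined over $F$, the lattice $X^{*}(A\times F^{sep})$ is the weight lattice with the fundamental weights as a basis permuted by the Galois group, and then conclude from the classification of tori whose character lattice is a permutation module (the paper cites Springer, Exercise 13.1.5(4), where you invoke the anti-equivalence with Galois lattices directly). Your write-up is somewhat more explicit about the permutation-module decomposition and the stabilizers $\Gamma_i$, but there is no substantive difference in method.
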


\begin{proof}
Let $X^*(A\times F^{sep})$ be the set of characters of $A$ defined over $F^{sep}$. Then the Galois group $\mathrm{Gal}(F^{sep}/F)$ acts on the group $X^*(A\times F^{sep})$.
When $G$ is quasi-split the restriction map $\Pi_{F^{sep}} \rightarrow \Pi_F$ is surjective and the fibers are exactly the $\mathrm{Gal}(F^{sep}/F)$-orbits. This implies that the set of absolute simple roots restricting to a given relative simple root is permuted by the Galois group $\mathrm{Gal}(F^{sep}/F)$.
We may use \cite[Exercise 13.1.5(4)]{Springer98} to conclude the lemma.
\end{proof}

\section{A lemma}

\begin{lemma} \label{lemma:maximal-value-vertices}
The inequality $\widehat{h}(\lambda) < \widehat{h}(\rho)$ holds.
\end{lemma}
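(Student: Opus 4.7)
The plan is to combine the explicit Satake-type formula for $\widehat h$ with a convexity argument. Recall from \S\ref{subsec:prerequistes-for-the-computation} that $h$ factors as a product whose only nontrivial local factor, at the chosen unramified place $v_0$, satisfies $\widehat h_{v_0}(\lambda)=\sum_{w\in W_F}q_{v_0}^{-\langle\rho,w\lambda\rangle}$ with $q_{v_0}=\sharp k(v_0)$, while $h_v$ is the characteristic function of $K_v$ for $v\neq v_0$. Collecting the factors at $v\neq v_0$ into a positive constant $C$, one obtains
\[
\widehat h(\lambda)=C\sum_{w\in W_F}q_{v_0}^{-\langle\rho,\,w\lambda\rangle}.
\]
A triangle inequality $|\widehat h(\lambda)|\le\widehat h(\Re\lambda)$ then reduces the statement to the real strict inequality $\widehat h(\sigma')<\widehat h(\rho)$ for the $\sigma'\in X^*(A)\otimes\mathbb R$ produced by the contour shift in the proof of Theorem \ref{thm:projection-onto-constants}.

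Next I would show that $\sigma\mapsto\widehat h(\sigma)$ is strictly convex on $X^*(A_d)\otimes\mathbb R$ and $W_F$-invariant. Each summand $q_{v_0}^{-\langle\rho,w\sigma\rangle}=\exp\bigl(-(\log q_{v_0})\langle w\rho,\sigma\rangle\bigr)$ is a convex exponential, and because $\rho$ is strictly $B$-dominant the Weyl orbit $\{w\rho:w\in W_F\}$ consists of $|W_F|$ distinct vectors that span $X^*(A_d)\otimes\mathbb R$ (reflections of $\rho$ across the simple walls together with $\rho$ itself already generate the ambient space), which forces the whole sum to be strictly convex. Reindexing $w\mapsto wu$ inside the sum simultaneously shows $\widehat h(u\sigma)=\widehat h(\sigma)$ for every $u\in W_F$, so $\widehat h$ is $W_F$-invariant. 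On the compact convex polytope $D_0:=\mathrm{conv}\{w\rho:w\in W_F\}$, strict convexity forces $\widehat h$ to attain its maximum precisely at the vertices, and $W_F$-invariance identifies this common maximum value with $\widehat h(\rho)$.

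To close the argument I would verify that the $\sigma'$ in question lies in $D_0$ but is not any Weyl-conjugate of $\rho$. In the contour-shift argument for Theorem \ref{thm:projection-onto-constants} one only moves the contour slightly past a single hyperplane $\{s_i=1\}$, so $\sigma'$ stays in a small open neighborhood of the vertex $\rho$ inside $D_0$, in particular away from the finite set $\{w\rho:w\in W_F\}$. Strict convexity then yields $\widehat h(\sigma')<\widehat h(\rho)$, and combined with the triangle inequality this upgrades to $|\widehat h(\lambda)|<\widehat h(\rho)$ for every $\lambda\in\Lambda_{\sigma'}(A)$, as required.

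The main difficulty is not analytic but bookkeeping: one must ensure that the $\sigma'$ can be chosen to sit inside $D_0\setminus\{w\rho:w\in W_F\}$ while still capturing the residues collected in the contour shift, and that the resulting inequality is uniform enough along $\Lambda_{\sigma'}(A)$ to guarantee $(\widehat h(\lambda)/\widehat h(\rho))^{2n}\to 0$ fast enough for dominated convergence to apply in Theorem \ref{thm:projection-onto-constants}. For Harder-type small perturbations of the contour both points are transparent, but they deserve explicit verification, as they are the pivotal reason the operator $T^n$ converges to the projection onto the constants.
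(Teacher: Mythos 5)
Your overall strategy is the same as the paper's, which simply defers to Harder's Lemma 3.2.3: write $\widehat h(\lambda)$ as $C\sum_{w\in W_F}q_{v_0}^{-\langle\rho,w\lambda\rangle}$, reduce to real $\sigma'=\Re(\lambda)$ by the triangle inequality, and use strict convexity plus $W_F$-invariance to conclude that on $D_0=\mathrm{ConvHull}(W_F\cdot\rho)$ the maximum $\widehat h(\rho)$ is attained only at the vertices $w\rho$. That part of your argument is correct and is in fact more detailed than the paper, which cites Harder for it.

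The gap is precisely at the step you flag as ``bookkeeping'' and then assert rather than prove: that the real parts $\sigma'$ of the shifted contours lie in $D_0$. Your phrasing ``$\sigma'$ stays in a small open neighborhood of the vertex $\rho$ inside $D_0$'' cannot be literally what is meant --- a vertex of a polytope has no neighborhood contained in the polytope. What has to be shown is the containment $\{\sigma : 1-\epsilon<\sigma_i\leq 1\ \forall i\}\subset D_0$ in the coordinates \eqref{eqn:coordinate-system}, i.e.\ that the translated cone $\rho-\mathrm{Cone}\{[\varpi_i]\}$ sits, near $\rho$, inside the tangent cone $\rho-\mathrm{Cone}\{\alpha_i\}$ of $D_0$ at $\rho$. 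Equivalently, each Galois-averaged fundamental weight $[\varpi_i]$ must be a non-negative combination of the simple \emph{relative} roots. This is the only point where the quasi-split case genuinely differs from Harder's split case, and it is exactly the content of the paper's proof: the restriction map $X^*(A\times\bar F)\twoheadrightarrow X^*(A)$ is identified with averaging over Galois orbits, which carries convex hulls to convex hulls, so the known split-case containment (Harder's Lemma 3.2.1(ii)) implies the relative one. Adding this one observation --- or a direct verification that the inverse of the relative Cartan-type matrix has non-negative entries --- closes your argument; without it, the claim that $\sigma'\in D_0$ is unsupported. Your remaining point, that $\sigma'$ is not a vertex, is fine: some coordinate of $\sigma'$ is $<1$ so $\sigma'\neq\rho$, and for small $\epsilon$ the box meets no other $w\rho$ since $\rho$ is regular.
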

\begin{proof}
The proof follows as in \cite[ Lemma 3.2.3]{Harder-74} which in turn depends on \cite[Lemma 3.2.1(ii)]{Harder-74}. We need only prove an analogous result to the latter Lemma for the quasi-split case.
That is, to show that
\[
    \lbrace \sigma = (\sigma_i) \suchthat 1-\epsilon < \Re(\sigma_i) \leq 1 \; \forall \; i \rbrace \subset \lbrace \sigma \suchthat \Re(\sigma) \in \mathrm{ConvHull}(W_F\cdot \rho) \rbrace.
\]
Note that the restriction map $X^*(A \times \bar{F}) \twoheadrightarrow X^*(A)$ in our chosen coordinate system \eqref{eqn:coordinate-system} can be identified with the map `average over the Galois orbits'. This is a convex map and hence preserves convex domains.
Since the lemma is known for the convex hull of the Weyl conjugate of $\rho$ in $X^*(A \times \bar{F})$, the lemma follows in the quasi-split case as well.
\end{proof}

\newpage

\end{document}